\newtheorem{thm}{Theorem}[section]
\newtheorem{lema}[thm]{Lemma}
\newtheorem{prop}[thm]{Proposition}
\theoremstyle{definition}
\newtheorem{defn}[thm]{Definition}
\theoremstyle{remark}
\newtheorem{rem}[thm]{Remark}
\numberwithin{equation}{section}
\newcommand{\R}{\mathbb R}
\newcommand{\ve}{\varepsilon}
\newcommand{\N}{\mathbb N}
\newcommand{\C}{\mathcal{C}}
\newcommand{\A}{\mathcal{A}}
\newcommand{\lam}{\lambda}
\newcommand{\Lam}{\Lambda}
\newcommand{\cd}{\rightharpoonup}
\newcommand{\cf}{\rightarrow}
\def\nequiv{\operatorname {/\!\!\!\!\!\equiv}}
\def\diver{\operatorname {\mathrm{div}}}
\begin{document}
\title[Quasilinear eigenvalues]{Quasilinear eigenvalues}
\author[J Fern\'andez Bonder, J P Pinasco, A M Salort]{Juli\'an Fern\'andez Bonder, Juan P. Pinasco, Ariel M. Salort }
\address{Departamento de Matem\'atica
 \hfill\break \indent FCEN - Universidad de Buenos Aires and
 \hfill\break \indent   IMAS - CONICET.
\hfill\break \indent Ciudad Universitaria, Pabell\'on I \hfill\break \indent   (1428)
Av. Cantilo s/n. \hfill\break \indent Buenos Aires, Argentina.}
\email[J. Fernandez Bonder]{jfbonder@dm.uba.ar}
\urladdr[J. Fernandez Bonder]{http://mate.dm.uba.ar/~jfbonder}
\email[J.P. Pinasco]{jpinasco@dm.uba.ar}
\urladdr[J.P. Pinasco]{http://mate.dm.uba.ar/~jpinasco}
\email[A.M. Salort]{asalort@dm.uba.ar}

\subjclass[2010]{35B27, 35P15, 35P30}

\keywords{Eigenvalue, homogenization, nonlinear eigenvalues \\
This work was partially supported by Universidad de Buenos Aires under grant 20020100100400 and by CONICET (Argentina) PIP 5478/1438.
}

\begin{abstract}
In this work, we review and extend some well known results for the eigenvalues of the
Dirichlet $p-$Laplace operator to a more general class of monotone quasilinear elliptic
operators. As an application we obtain some homogenization results for nonlinear
eigenvalues.
\end{abstract}

\maketitle

\section{Introduction}

In this work we review the eigenvalue problem associated to the $p-$Laplace operator,
$$
\begin{cases}
 -\Delta_p := -\diver(|\nabla u|^{p-2}\nabla u) = \lambda \rho u  & \text{in }\Omega\\
u=0 & \text{on } \partial\Omega,
\end{cases}
$$
we describe its history and the main results obtained in the past years, and we extend those
results to more general quasilinear problems.

 To be precise, we consider the equation
\begin{equation}\label{1.1}
\begin{cases}
-\diver(a(x,\nabla u)) = \lambda \rho(x) |u|^{p-2} u & \text{in }\Omega\\
u=0 & \text{on } \partial\Omega
\end{cases}
\end{equation}
where the functions $a(x,\xi)$ has the same homogeneity of $|\xi|^{p-2}\xi$,  and has
precise hypotheses that we state below (see Section 3). The domain $\Omega\subset \R^N$
is assumed to be bounded, $N\ge 1$, and the weight function $\rho$ is assumed to be
bounded away from zero and infinity.

We denote the spectrum of \eqref{1.1} by $\Sigma$, i.e.
$$
\Sigma := \{\lambda\in\R\colon \text{there exists a nontrivial weak solution to \eqref{1.1}}\},
$$
and we focus our attention on the properties of the set $\Sigma$ and the associated
eigenfunctions.

The paper is organized as follows: in Section 2 we introduce the  origins of the $p-$Laplace
operator, and the history of the developments made for the eigenvalue problem. In Section 3
we introduce more general operators generalizing the $p-$Laplacian, we define its
variational  spectrum (which is not known if coincides with $\Sigma$), and we collect some
necessary definitions and results. Section 4 is devoted to the properties of eigenvalues and
eigenfunctions. Finally we close the paper with some recent results on eigenvalue
homogenization in Section 5.

\section{A bit of history}

The one dimensional $p-$Laplace ordinary differential equation,
\begin{equation}\label{unad}
 -(|y'|^{p-2}y')' =\rho(x) |y|^{p-2}y
\end{equation}
was studied first by Leonhard Euler, in the work \cite{Euler} appeared in 1728. Several cases
were presented as an example of a nonlinear second order equation which cannot be
integrated with known techniques.

He considered nonlinear equations of the general form
 $$
a x^m dx^p = y^n dy^{p-2}ddy,
$$
where $a$ is a constant,  which correspond to equation \eqref{unad} when $a=-(p-1)^{-1}$,
$\rho= x^m$, and $n=1-p$. He introduced in that work the exponential function in order to
change variables, and reduced it to a first order equation. He used the following substitution
$$
\begin{cases}
x^{\frac{1}{n+p-1}} = e^{\int z dt} \\
y=e^{(m+p)\int z dt}
\end{cases}
$$
in paragraphs 7-9, where ``\emph{dx constant ponatur}" means that $x$ was chosen as the
independent variable, and then $ddx=0$.

Observe that, although this substitution enable us to work with Emden-Fowler like equations
$$
-(|y'|^{p-2}y')'=|y|^{q-2}y,
$$
 a different one is needed when $p=q$ since $n=1-p$. This case was included in paragraph 20, where he considered
  $$
dy^{m-1} ddy = P(x)y^m dx^{m+1}+Q(x)y^{m-b}dy^b dx^{m-b+1}
$$
where we have interchanged $x$ and $y$ for readability. In modern notation, with $m=p-1$,
reads
$$
|y'|^{p-2} y'' = P(x)|y|^{p-2}y +Q(x)y^{p-1-b}|y'|^b.
$$
Euler emphasized the homogeneity of the three terms involved, and the fact that more similar
terms can be added. For this equation he derived a generalized Riccati equation, rediscoverd
for the one dimensional $p-$Laplacian in the $20^{th}$ century:
$$
z^{p-2}z'+z^{p} = Q(y)z^{b} + P(y),
$$
or, by calling $z^{p-1}=w$
\begin{equation}\label{riccati}
\frac{w'}{p-1} +w^{\frac{p}{p-1}}  = Q(y)w^{\frac{b}{p-1}}  + P(y).
\end{equation}

When $Q\equiv 0$, the Riccati equation \eqref{riccati} was used by Beesack in 1961, see
\cite{Bees}, connected with optimal constants in Hardy's inequality. Let us remark that Bihari
in 1956 studied a related nonlinear equation in \cite{Bih},
$$
-y'' =  Q(x)f(y,y'),
$$
with $y f(y,y')>0$, $f(cy,cy')= cf(y,y')$, and  Lipschitz on every bounded domain of $ \R$.
However, this last condition excludes the $p-$Laplacian.

Few years later, Browder studied $N-$dimensional quasilinear equations, inspired in previous
works of Vi{\v{s}}ik, see \cite{Bro1, Bro2} and the references in this work.  He introduced the
so-called monotonicity methods (discovered almost simultaneously by Minty \cite{Min}, and
Vainberg and Kachurovski \cite{VK}). Since then, the study of quasilinear operators
experimented an explosive growth, and both variational and non-variational techniques were
introduced by by Browder, Fu{\v{c}{\'{i}k, Ladyzhenskaya, Leray, Lions, Morrey, Ne{\v{c}}as,
Rabinowicz, Schauder, Serrin, Trudinger... among several other mathematicians.

\bigskip
The eigenvalue problem for the $p-$Laplace operator started with the pioneering work of Browder \cite{Browder1, Browder2, Browder3, Browder4}. In those papers, he studied the nonlinear eigenvalue problem $A(u) = \lambda B(u)$, where $\lambda$ is real parameter, and
 $$
 A(u)= \sum_{|\alpha|\le m}(-1)^{|\alpha|} D^{\alpha}F_{p^{\alpha}}(x, u, \dots, D^m u),
 $$
 $$
B(u)= \sum_{|\beta|\le m-1}(-1)^{|\beta|} G_{p^{\beta}}(x, u, \dots, D^{m-1} u).
 $$
 This higher-order elliptic problem is the Euler-Lagrange equation of the variational problem
 $$
\min\left\{\int_{\Omega} F(x, u, \dots, D^m u)\, dx \colon u\in V \mbox{ and } \int_{\Omega}G(x,u,\dots, D^{m-1}u)\, dx= c\right\}
 $$
 where
$V$ is some closed subspace of  $W^{m,p} (\Omega)$.

By introducing the  variables $\zeta = \{\zeta_{\alpha} \colon |\alpha|=m\}$, $\psi= \{\psi_{\xi} \colon |\xi|\le m-1\}$,  the functions $F$, $G$ are measurable in $x$, and $C^1$ in the variables $\psi$, $\zeta$,   satisfying polynomial growth conditions which include  the following particular case for the Dirichlet boundary value problem,
\begin{align*}
|F(x,\psi, \zeta)| \le & c ( 1 + |\zeta|^p + |\psi|^p), \\
|F_{\alpha}|+|F_{\xi}|  \le & c (1+ |\zeta|^{p-1} + |\psi|^{p-1}),\\
G =& u^q
\end{align*}
where $1<p<\infty$, $q< np(n-mp)^{-1}$ for $n>mp$, and any $q$ for $n<mp$.

With appropriate conditions of ellipticity and coercivity, the existence of an eigenvalue and a corresponding eigenfunction which is a weak solution of $A(u)=\lambda B(u)$ can be found in \cite{Browder1}. Moreover, for $p\ge 2$, and imposing more regularity on $F$ and $G$ (at least $C^2$ in the variables $\psi$ and $\zeta$), the existence of  a sequence of eigenvalues was announced in \cite{Browder2} and proved in \cite{Browder3}. We can found in those works the heavy --now standard-- machinery of Palais-Smale sequences, deformation lemmas, Lyusternik-Schnirelman category, and monotonicity arguments.

Finally, a different approach can be found in \cite{Browder4}, based on Galerkin approximations. Here, for higher-order quasilinear operators satisfying the same coercivity and polynomial growth conditions, the regularity conditions can be relaxed, and a sequence of eigenvalues is obtained for $C^1$ functions and $1<p<\infty$.

Since then, several works devoted to this subject appeared. The interested reader can browse into the book of Fu{\v{c}}{\'{\i}}k, Ne{\v{c}}as, J. Sou{\v{c}}ek, and V. Sou{\v{c}}ek  \cite{Fucik} for a survey up to the mid 1970s. It is worth noticing that several of the works cited therein were published in Russian, or in journals from Central and East Europe, so many results were rediscovered later. Nonlinear eigenvalue problems was an active area of research among Czech, German and Hungarian mathematicians in this decade (we mention Amann, Elbert, Fu{\v{c}}{\'{\i}}k, Hess, Kufner, Ne{\v{c}}as, and Zeidler, to cite only a few of them).  See for example \cite{Am} for generalizations of the Browder's results and applications to Hammerstein's equations; \cite{fuck} for integro-differential equations; and \cite{Zeid} where two sequences of eigenvalues going to $\pm \infty$ were obtained for indefinite eigenvalue problems.

In the $p-$Laplacian case, i.e. when $a(x,\xi)=|\xi|^{p-2}\xi$, and for Dirichlet boundary conditions,  the structure of $\Sigma$ has been analyzed by several authors and it is know that
\begin{itemize}
\item $\Sigma\subset (0,\infty)$ is a closed set, see the work of Lindqvist \cite{Lind}.

\item $\lambda_1=\min\Sigma$ is the only eigenvalue that has a nonnegative associated eigenfunction (i.e., is a principal eigenvalue). This principal eigenvalue has a variational characterization given by
$$
\lambda_1 = \inf_{v\in W^{1,p}_0(\Omega)} \frac{\int_\Omega |\nabla v|^p\, dx}{\int_\Omega |v|^p\, dx}.
$$
The above infimum is realized precisely at eigenfunctions associated to $\lambda_1$. See \cite{Anane, Lind}.

\item $\lambda_1$ is {\em isolated} and {\em simple}. That is, there exists $\delta>0$ such
    that $$(\lambda_1,\lambda_1+\delta)\cap \Sigma = \emptyset,$$ and if $u_1, u_2\in
    W^{1,p}_0(\Omega)$ are two eigenfunctions associated to $\lambda_1$ then there
    exists $c\in \R$ such that $u_1 = c u_2$.  See \cite{AlHu, Anane, Lind}.

\item There exists a sequence of \emph{variational eigenvalues}, usually denoted by $\Sigma_{\text{var}}$ given by
$$
\lambda_k := \inf_{C\in \C_k} \sup_{v\in C} \frac{\int_\Omega |\nabla v|^p\, dx}{\int_\Omega |v|^p\, dx},
$$
where $\C_k :=\{ C\subset W^{1,p}_0(\Omega)\colon C \text{ is closed}, C=-C,
 \gamma(C)\ge k\}$ and $\gamma$ is the Krasnoselskii genus. This was the approach of
 Browder, by using the Lyusternik-Schnirelmann theory, see also \cite{GAP0, GAP}.

\item There exists other possible ways to construct variational eigenvalues for this type of equations. Some authors prefer to call $\Sigma_\text{var}$ the {\em Lyusternik-Schnirelmann} eigenvalues, although in this work we will use the more extended denomination and refer to these as the {\em variational eigenvalues}. See \cite{dra} for a comprehensive discussion on this topic.

\item The sequence $\Sigma_{\text{var}}$ has the asymptotic behavior given by the Weyl's law
$$
c \left(\frac{k}{|\Omega|}\right)^{\frac{N}{p}}\le \lambda_k\le C \left(\frac{k}{|\Omega|}\right)^{\frac{N}{p}},
$$
for some (universal) constants $c,C>0$ depending only on $N$ and $p$. See \cite{Friedlander, GAP}.

\item As the first eigenvalue $\lambda_1$ is isolated in $\Sigma$ which is a closed set, the second eigenvalue is well defined as
$$
\Lambda_2 = \min\{\lambda\in\Sigma\colon \lambda>\lambda_1\}>\lambda_1.
$$
It is known that $\Lambda_2$ coincides with the second variational eigenvalue $\lambda_2$. See \cite{AT, DFCG}.

\item For one dimensional problems, $\Omega=(a,b)\subset \R$ it is known that any
    eigenvalue is simple, the eigenfunction corresponding to $\lambda_k$ has exactly $k+1$
    zeros counting the boundary points $a$ and $b$, and this fact enable us to obtain them
    variationally. The eigenvalues can be computed explicitly, and the corresponding
    eigenfunctions are obtained in terms of the Gaussian hypergeometric function  (see
    \cite{Bai, DDM, FBP, Walter}).

\item A major open question is to know whether if $\Sigma = \Sigma_{\text{var}}$ or not. An
    answer to this problem is only known in one space dimension. In this situation the
    question is answered positively, using that eigenvalues associated to $\lambda_k$ has
    $k$ nodal domains. See \cite{FBP, Walter}. A negative result is known for periodic
    boundary conditions, see \cite{biry, dra, drta}.
\end{itemize}

The objective of this paper is the extension of all these facts to the more general problem
\eqref{1.1}. Let us observe that the first item follows by monotonicity arguments, and the second one was already generalized to \eqref{1.1} by
\cite{Kaw-Lu-Pra}. So here we complete the program in performing the others extensions.

As a corollary of our results we obtain some alternative proofs of convergence theorems for
nonlinear eigenvalue homogenization that were originally proved in \cite{Ch-dP}.

\section{Preliminary Results}
In this section we review some results gathered from the literature, enabling us to clearly state our results and making the paper self-contained.

\subsection{Monotone operators}

First, we give the precise hypotheses on the coefficient $a(x,\xi)$ in order to be able to treat the eigenvalue equation \eqref{1.1} variationaly. The precise context is the assumption that the induced operator $\mathcal{A}\colon W_0^{1,p}(\Omega)\to W^{-1,p'}(\Omega)$ given by
$$
\mathcal{A} u := -\diver(a(x,\nabla u)),
$$
defines a monotone operator.

So we assume that $a\colon \Omega\times \R^N\to \R^N$ satisfies,  for every $\xi\in\R^N$ and  a.e. $x\in \Omega$, the following conditions:

\begin{enumerate}
\item[(H0)] {\em measurability:} $a(\cdot,\cdot)$ is a Carath\'eodory function, i.e. $a(x,\cdot)$ is continuous a.e. $x\in \Omega$,  and $a(\cdot,\xi)$ is measurable for every $\xi\in\R^N$.

\item[(H1)] {\em monotonicity:} $0\le (a(x,\xi_1)-a(x,\xi_2))(\xi_1-\xi_2)$.

\item[(H2)] {\em coercivity:} $\alpha |\xi|^p \le a(x,\xi) \xi$.

\item[(H3)] {\em continuity:} $a(x,\xi)\le \beta|\xi|^{p-1}$.

\item[(H4)] {\em $p-$homogeneity:} $a(x,t\xi)=t^{p-1} a(x,\xi)$ for every $t>0$.

\item[(H5)] {\em oddness:} $a(x,-\xi) = -a(x,\xi)$.
\end{enumerate}

Let us introduce $\Psi(x,\xi_1, \xi_2)=a(x,\xi_1) \xi_1 +a(x,\xi_2) \xi_2$ for all $\xi_1, \xi_2 \in \R^N$, and all $x\in \Omega$; and let $\delta=min\{p/2, (p-1)\}$.

\begin{enumerate}
\item[(H6)] {\em equi-continuity:}
$$
|a(x,\xi_1) -a(x,\xi_2)| \le c \Psi(x,\xi_1, \xi_2)^{(p-1-\delta)/p}(a(x,\xi_1) -a(x,\xi_2)) (\xi_1-\xi_2)^{\delta/p}
$$

\item[(H7)] {\em cyclical monotonicity:} $\sum_{i=1}^k  a(x,\xi_i) (\xi_{i+1}-\xi_i) \le 0$, for all $k\ge 1$, and $\xi_1,\ldots, \xi_{k+1}$, with $\xi_1=\xi_{k+1}$.

\item[(H8)] {\em strict monotonicity:} let $\gamma = \max(2,p)$, then
$$
\alpha |\xi_1-\xi_2|^{\gamma}\Psi(x,\xi_1,\xi_2)^{1-(\gamma/p)}\le (a(x,\xi_1)-a(x,\xi_2)) (\xi_1-\xi_2).
$$
\end{enumerate}

See \cite{Con}, Section 3.4 where a detailed discussion on the relation and implications of every condition (H0)--(H8) is given.

In particular, under these conditions, we have the following Proposition:

\begin{prop}[\cite{Con}, Lemma 3.3]\label{potential.f}
Given $a(x,\xi)$ satisfying {\em (H0)--(H8)} there exists a unique Carath\'eodory function $\Phi$ which is even, $p-$homogeneous strictly convex and differentiable in the variable $\xi$ satisfying
\begin{equation}\label{cont.coer.phi}
\alpha |\xi|^p \le \Phi(x,\xi)\le \beta |\xi|^p
\end{equation}
for all $\xi\in \R^N$ a.e. $x\in\Omega$ such that
$$
\nabla_{\xi} \Phi(x,\xi)= p\, a(x,\xi)
$$
and normalized such that $\Phi(x,0)=0$.
\end{prop}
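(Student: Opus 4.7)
The plan is to exhibit $\Phi$ explicitly by integrating along radial segments. Motivated by the fundamental theorem of calculus, I would set
\[
\Phi(x,\xi) := \int_0^1 p\, a(x,t\xi)\cdot\xi\, dt,
\]
which by the $p$-homogeneity (H4) collapses to the closed form $\Phi(x,\xi)=a(x,\xi)\cdot\xi$, since $\int_0^1 pt^{p-1}\,dt=1$. This is the candidate I would carry through the rest of the argument.

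A bookkeeping check using the algebraic hypotheses on $a$ then delivers most of the stated properties. Oddness (H5) forces $a(x,0)=0$, hence the normalization $\Phi(x,0)=0$ and evenness $\Phi(x,-\xi)=\Phi(x,\xi)$. The $p$-homogeneity of $\Phi$ is inherited directly from (H4). Pairing (H2) and (H3) with $\xi$ gives the two-sided bound $\alpha|\xi|^p\le\Phi(x,\xi)\le\beta|\xi|^p$, and Carath\'eodory character is inherited from (H0), since $\Phi$ is a scalar product of Carath\'eodory maps.

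The substantive content is convexity, strict convexity, differentiability, and the gradient identity $\nabla_\xi\Phi=p\,a$. Here I would invoke Rockafellar's theorem: a cyclically monotone operator (which $\xi\mapsto p\,a(x,\xi)$ is, by (H7)) is contained in the subdifferential of a proper lower-semicontinuous convex function. Applied pointwise in $x$, this produces a convex $\widetilde\Phi(x,\cdot)$ with $p\,a(x,\xi)\in\partial_\xi\widetilde\Phi(x,\xi)$, which I normalize so that $\widetilde\Phi(x,0)=0$. Because $\widetilde\Phi(x,\cdot)$ is convex and its subdifferential admits the continuous single-valued selection $p\,a(x,\cdot)$ (continuity following from (H0), (H3), (H6)), a standard convex-analysis argument forces $\partial_\xi\widetilde\Phi=\{p\,a\}$, so $\widetilde\Phi(x,\cdot)$ is in fact $C^1$ with $\nabla_\xi\widetilde\Phi=p\,a$. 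Integrating this identity along the segment $s\mapsto s\xi$ for $s\in[0,1]$ and using the fundamental theorem of calculus identifies $\widetilde\Phi(x,\xi)$ with the line integral defining $\Phi$. Strict convexity then follows from the quantitative strict monotonicity (H8), since strict monotonicity of the gradient and strict convexity are equivalent for $C^1$ functions. Uniqueness is immediate: any $C^1$ potential with prescribed gradient $p\,a$ and $\Phi(x,0)=0$ must coincide with the radial line integral.

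The main obstacle I anticipate is propagating measurability in $x$ through Rockafellar's sup-over-polygonal-chains construction, which is not manifestly jointly measurable in $(x,\xi)$. The availability of the explicit closed form $\Phi(x,\xi)=a(x,\xi)\cdot\xi$ sidesteps this difficulty, since measurability is then transparent from (H0); the only task that remains is to verify that the Rockafellar potential agrees with this closed form via the radial integration described above, and to invoke (H6) to ensure the required uniform-in-$x$ continuity of $\nabla_\xi\Phi$.
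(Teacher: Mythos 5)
Your argument is correct, but note that the paper does not actually prove this proposition: it is imported verbatim from \cite{Con}, Lemma 3.3, so there is no in-paper proof to compare against. What you have written is essentially the standard construction behind that reference. Your ordering of the logic is the right one: the closed form $\Phi(x,\xi)=a(x,\xi)\cdot\xi$ does not by itself have gradient $p\,a(x,\xi)$ unless the field $a(x,\cdot)$ is conservative, which is precisely what the cyclical monotonicity (H7) supplies via Rockafellar's theorem; you correctly route the gradient identity through Rockafellar and only then use $p$-homogeneity (H4) to collapse the radial line integral to the closed form --- an identity the paper itself uses later (via Euler's formula for homogeneous functions) in the proof that the first eigenfunction has constant sign. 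Your handling of measurability is also the right move: since the potential, once identified, equals $a(x,\xi)\cdot\xi$, the Carath\'eodory property is inherited directly from (H0) and one never needs joint measurability of the sup-over-chains construction. Two small points are worth making explicit if you write this up in full. First, the Rockafellar potential is finite on all of $\R^N$ because $p\,a(x,\cdot)$ is a continuous, everywhere-defined monotone map, hence maximal monotone with full domain; finiteness is needed before the continuous-selection argument (limits of gradients at nearby points of differentiability) can force $\partial_\xi\widetilde\Phi(x,\xi)=\{p\,a(x,\xi)\}$. Second, to get \emph{strict} monotonicity of the gradient from (H8) for $\xi_1\neq\xi_2$ you should observe that $\Psi(x,\xi_1,\xi_2)\ge \alpha\bigl(|\xi_1|^p+|\xi_2|^p\bigr)>0$ by (H2), so the right-hand side of (H8) is genuinely positive; with that, strict convexity of the $C^1$ function $\Phi(x,\cdot)$ follows as you say.
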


\begin{rem} In the one dimensional case, hypotheses (H4) and (H5) imply that
$$
a(x,\xi) = a(x) |\xi|^{p-2}\xi,
$$
with $a(x) := a(x,1)$. In this case, the potential function $\Phi$ is given by
$$
\Phi(x,\xi) = a(x) |\xi|^p.
$$
\end{rem}

\begin{rem}
In dimension $N>1$, the prototypical example for $a(x,\xi)$ is
$$
a(x,\xi) = |A(x)\xi\cdot\xi|^{\frac{p-2}{2}} A(x)\xi.
$$
In this case, the potential function $\Phi(x,\xi)$ of Proposition \ref{potential.f} is given by
$$
\Phi(x,\xi) = 2|A(x)\xi\cdot\xi|^{\frac{p}{2}}
$$
\end{rem}

\subsection{Definition of $G$-convergence}\label{gammaconv}

For our application to homogeneization, the concept of $G-$convergence of operators is needed. We review here the basic definitions and properties.

\begin{defn}
We say that the family of operators $\mathcal{A}_\ve u := -\diver(a_\ve(x,\nabla u))$ $G$-converges to $\mathcal{A}u:=-\diver(a(x,\nabla u))$ if for every $f\in W^{-1,p'}(\Omega)$  and for every $f_\ve$ strongly convergent to $f$ in $W^{-1,p'}(\Omega)$, the solutions $u^\ve$ of the problem
\begin{equation*}
\begin{cases}
-\diver(a_\ve(x,\nabla u^\ve))=f_\ve &\textrm{ in } \Omega \\
u^\ve=0 & \textrm{ on } \partial \Omega
\end{cases}
\end{equation*}
satisfy the following conditions
\begin{align*}
 u^\ve \cd u  &\qquad \mbox{ weakly in }  W^{1,p}_0(\Omega), \\
 a_\ve(x, \nabla u^\ve) \cd a(x,\nabla u)  &\qquad \mbox{ weakly in }  (L^{p}(\Omega))^N,
\end{align*}
where $u$ is the solution to the equation
\begin{equation*}
\begin{cases}
-\diver(a(x,\nabla u))=f & \textrm{ in } \Omega  \\
u=0 &\textrm{ on } \partial \Omega.
\end{cases}
\end{equation*}
\end{defn}

For instance, in the linear periodic case,  the family $-\diver(A(\tfrac{x}{\ve})\nabla u)$ $G$-converges to a limit operator $-\diver(A^*\nabla u)$ where $A^*$ is a constant matrix which can be
characterized in terms of $A$ and certain auxiliary functions. See for example \cite{Cio}.

\medskip

It is shown in \cite{Con} that properties (H0)--(H8) are stable under $G-$convergence, i.e.
\begin{thm}[\cite{Con}, Theorem 2.3]
If $\mathcal{A}_\ve u := -\diver(a_\ve(x,\nabla u))$ $G-$converges to $\mathcal{A}u:=-\diver(a(x,\nabla u))$ and $a_\ve(x,\xi)$ satisfies {\em (H0)--(H8)} uniformly, then $a(x,\xi)$ also satisfies {\em (H0)--(H8)}.
\end{thm}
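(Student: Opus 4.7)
The plan is to construct, for every $\xi \in \R^N$, a sequence of Tartar-style correctors $w_\ve^\xi \in W^{1,p}(\Omega)$ satisfying
\begin{equation*}
w_\ve^\xi \cd \xi\cdot x \text{ in } W^{1,p}(\Omega), \qquad a_\ve(x, \nabla w_\ve^\xi) \cd a(x,\xi) \text{ in } L^{p'}(\Omega;\R^N).
\end{equation*}
Such correctors are supplied essentially for free by the $G$-convergence hypothesis: given $\xi$, set $f_\ve := -\diver(a(\cdot,\xi)) \in W^{-1,p'}(\Omega)$, impose boundary data agreeing with the affine function $\xi \cdot x$ (working locally via a cutoff if necessary), and use equi-coercivity of the $\A_\ve$ to extract the required convergences.

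Once the correctors are in hand, each of (H0)--(H8) for the limit is obtained by writing the corresponding pointwise algebraic identity or inequality at level $\ve$, testing against a non-negative $\phi\in C_c^\infty(\Omega)$, and identifying the weak limit of the nonlinear product $a_\ve(x,\nabla w_\ve^{\xi_i})\cdot \nabla w_\ve^{\xi_j}$ via a div-curl (compensated compactness) argument. For example, (H1) reduces to $(a_\ve(x,\nabla w_\ve^{\xi_1})- a_\ve(x,\nabla w_\ve^{\xi_2}))\cdot(\nabla w_\ve^{\xi_1}-\nabla w_\ve^{\xi_2})\ge 0$: multiplying by $\phi\ge 0$ and applying div-curl shows that this product converges in $\D'(\Omega)$ to $(a(x,\xi_1)-a(x,\xi_2))\cdot(\xi_1-\xi_2)$, and arbitrariness of $\phi$ gives monotonicity a.e.; a countable-dense argument in $\xi_1,\xi_2$ together with continuity of $a(x,\cdot)$ extends it to all pairs. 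Coercivity (H2) and growth (H3) follow from weak lower semicontinuity of $\xi\mapsto|\xi|^p$ and of the $L^{p'}$-norm applied to the correctors. Homogeneity (H4) and oddness (H5) pass by uniqueness of $G$-limits: the rescaled families $t^{1-p}a_\ve(x,t\xi)$ and $-a_\ve(x,-\xi)$ still satisfy (H0)--(H8) uniformly and $G$-converge to the corresponding rescaling of $a$, so both must coincide with $a(x,\xi)$. Cyclical monotonicity (H7) is immediate once one sums the cycle inequality over $i=1,\dots,k$ applied to the tuple $(w_\ve^{\xi_1},\dots,w_\ve^{\xi_{k+1}})$ and passes to the limit termwise.

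Condition (H0) is slightly delicate: measurability in $x$ is inherited from weak $L^{p'}$-limits of measurable functions, while continuity in $\xi$ has to be deduced a posteriori from the quantitative bound (H6). The main obstacle is precisely the stability of the quantitative inequalities (H6) and (H8), which mix a lower bound on $(a(x,\xi_1)-a(x,\xi_2))\cdot(\xi_1-\xi_2)$ with H\"older-type factors built from $\Psi(x,\xi_1,\xi_2)$. The exponents $(p-1-\delta)/p$, $\delta/p$ and $1-\gamma/p$ are tuned so that H\"older's inequality, combined with the uniform $L^p$- and $L^{p'}$-bounds on $\nabla w_\ve^\xi$ and $a_\ve(\cdot,\nabla w_\ve^\xi)$ coming from (H2)--(H3), permits splitting the product into a factor converging strongly (by div-curl) and a factor on which one invokes weak-$L^1$ lower semicontinuity. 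This careful bookkeeping is the technical heart of the argument, and is where the full package (H0)--(H8), rather than mere monotonicity and coercivity, is genuinely needed; beyond these quantitative passages to the limit, the proof is essentially formal.
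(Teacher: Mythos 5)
A preliminary remark: the paper does not prove this theorem at all --- it is imported verbatim from \cite{Con}, Theorem 2.3 --- so there is no in-paper argument to compare yours against. That said, your outline does follow the strategy of the cited literature (\cite{Chi1}, \cite{Con}): build oscillating test functions $w_\ve^\xi$ with $\nabla w_\ve^\xi\cd\xi$ and $a_\ve(\cdot,\nabla w_\ve^\xi)\cd a(\cdot,\xi)$, then pass to the limit in the pointwise relations via the div--curl lemma. The treatment of (H1), (H2), (H4), (H5) and (H7) is sound in outline, modulo the fact that the correctors are not quite ``for free'': the definition of $G$-convergence used in this paper concerns only homogeneous Dirichlet data, so prescribing an affine weak limit requires the local character of $G$-convergence, which is itself a nontrivial lemma in \cite{Chi1}, not a one-line cutoff remark.

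There are two genuine gaps. First, your derivation of (H3) fails as written: weak lower semicontinuity of the $L^{p'}$-norm bounds $\|a(\cdot,\xi)\phi\|_{p'}$ from above by $\beta\liminf_\ve\||\nabla w_\ve^\xi|^{p-1}\phi\|_{p'}$, but weak convergence of $\nabla w_\ve^\xi$ to $\xi$ controls that quantity only from \emph{below}, which is the wrong direction. The correct mechanism is to combine the div--curl convergence of the energies, $a_\ve(x,\nabla w_\ve^\xi)\cdot\nabla w_\ve^\xi\to a(x,\xi)\cdot\xi$ in $\D'(\Omega)$, with the coercivity (H2) to obtain $\limsup_\ve\int|\nabla w_\ve^\xi|^p\phi\le\alpha^{-1}\int a(x,\xi)\cdot\xi\,\phi$; this yields (H3) for the limit with the degraded constant $\beta(\beta/\alpha)^{p-1}$ (harmless for the statement, but a different argument). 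Second, and more seriously, (H6) and (H8) --- which you correctly single out as the technical heart --- are not actually proved: saying the exponents are ``tuned so that H\"older's inequality permits splitting'' names the difficulty without resolving it. The left-hand sides involve products such as $|\nabla w_\ve^{\xi_1}-\nabla w_\ve^{\xi_2}|^{\gamma}\,\Psi_\ve^{1-\gamma/p}$ with a nonpositive exponent on $\Psi_\ve$; neither factor converges strongly, and the product is not of div--curl type, so the inequality must be rearranged before H\"older and lower semicontinuity become applicable. As it stands, your sketch establishes closure of the class defined by (H1)--(H5) and (H7), but not of the full class (H0)--(H8), which is precisely the content that distinguishes this theorem from the softer compactness result of Proposition \ref{Gconv.ve}.
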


In the periodic case, i.e.  when $a_\ve(x,\xi)=a(\tfrac{x}{\ve}, \xi)$, and $a(\cdot, \xi)$ is $Q-$periodic for every $\xi\in \R^N$, one has that $\mathcal{A_\ve}$ $G-$converges to the homogenized operator $\mathcal{A}_h$ given by $\mathcal{A}_h u = -\diver(a_h(\nabla u))$, where  $a_h:\R^N \cf \R^N$ can be characterized by
\begin{equation}\label{ah}
 a_h(\xi)=\lim_{s\to \infty} \frac{1}{s^N} \int_{Q_s(z_s)} a(x,\nabla \chi^\xi_s + \xi)dx
\end{equation}
 where $\xi\in \R^N$, $Q_s(z_s)$ is the cube of side length $s$ centered at $z_s$  for any family $\{z_s\}_{s>0}$ in $\R^N$, and $\chi^\xi_s$ is the solution of the following auxiliary problem
\begin{equation}
\begin{cases}
 -\diver(a(x,\nabla \chi^\xi_s + \xi))=0 \quad \textrm{ in } Q_s(z_s) \\
 \chi^\xi_s\in W^{1,p}_0(Q_s(z)),
\end{cases}
\end{equation}
see \cite{DF1992} for the proof.

In the general case, one has the following compactness result due to \cite{Chi1}
\begin{prop}[\cite{Chi1}, Theorem 4.1]\label{Gconv.ve}
Assume that $a_\ve(x,\xi)$ satisfies {\em (H1)--(H3)} then, up to a subsequence, $\mathcal{A}_\ve$ $G-$converges to a maximal monotone operator $\mathcal{A}$ whose coefficient $a(x,\xi)$ also satisfies {\em (H1)--(H3)}
\end{prop}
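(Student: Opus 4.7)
The plan is to adapt the Murat--Tartar compensated compactness method for $G$-convergence to the nonlinear monotone setting, following the strategy of Chiad\`o Piat, Dal Maso and Defranceschi. First I would establish uniform a priori estimates. For each $f\in W^{-1,p'}(\Omega)$ and each $\ve$, the unique weak solution $u^\ve_f$ of $\mathcal{A}_\ve u^\ve_f = f$ (existence and uniqueness follow from Browder--Minty under (H1)--(H3)) satisfies, by coercivity (H2),
$$\alpha\|\nabla u^\ve_f\|_{L^p}^p \le \int_\Omega a_\ve(x,\nabla u^\ve_f)\cdot \nabla u^\ve_f\,dx = \langle f,u^\ve_f\rangle,$$
so $\|u^\ve_f\|_{W^{1,p}_0}\le C(f)$, and the growth bound (H3) then gives a uniform bound for $a_\ve(\cdot,\nabla u^\ve_f)$ in $(L^{p'}(\Omega))^N$. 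Choosing a countable dense family $\{f_k\}\subset W^{-1,p'}(\Omega)$ and extracting via a Cantor diagonal argument, I obtain a subsequence $\ve_n\to 0$ such that, for every $k$,
$$u^{\ve_n}_{f_k}\rightharpoonup u_k \text{ weakly in } W^{1,p}_0(\Omega),\qquad a_{\ve_n}(\cdot,\nabla u^{\ve_n}_{f_k})\rightharpoonup \sigma_k \text{ weakly in } (L^{p'}(\Omega))^N,$$
with $-\diver\sigma_k=f_k$ in the sense of distributions. Density and the uniform bounds extend this to a well-defined map $f\mapsto(u,\sigma)$ on all of $W^{-1,p'}(\Omega)$.

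The core step is to identify this limit as a local operator of the form $\sigma = a(x,\nabla u)$ for some Carath\'eodory field $a$ satisfying (H1)--(H3), so that $\mathcal{A} u=-\diver a(x,\nabla u)$ is well-defined and maximal monotone. This is the nonlinear analogue of Tartar's oscillating test function method. For each $\lambda\in\R^N$ and each small cube $Q\subset\Omega$, let $w^{\ve,\lambda}_Q$ solve the auxiliary cell problem $-\diver a_\ve(x,\nabla w^{\ve,\lambda}_Q)=0$ in $Q$ with affine boundary datum $\lambda\cdot x$; passing to further subsequences yields weak limits $w^\lambda_Q$ and $\tau^\lambda_Q$ of $w^{\ve_n,\lambda}_Q$ and of $a_{\ve_n}(\cdot,\nabla w^{\ve_n,\lambda}_Q)$ respectively. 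The div-curl (Murat--Tartar) lemma, applied to each cross term produced by expanding the monotonicity inequality
$$\int_Q \varphi\,(a_{\ve_n}(x,\nabla u^{\ve_n}_{f_k})-a_{\ve_n}(x,\nabla w^{\ve_n,\lambda}_Q))\cdot(\nabla u^{\ve_n}_{f_k}-\nabla w^{\ve_n,\lambda}_Q)\,dx\ge 0,$$
allows passage to the limit: the divergences of the $a_{\ve_n}$-terms are compact in $W^{-1,p'}(\Omega)$ (either equal to the fixed $f_k$ or to zero), while the curls of the gradient terms vanish identically. Localizing $Q$ via Lebesgue differentiation and invoking Minty's trick then identifies $\sigma_k(x)=a(x,\nabla u_k(x))$ a.e.\ for a Carath\'eodory field $a$, with (H1)--(H3) inherited by direct passage to the limit in the corresponding pointwise inequalities for $a_\ve$.

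The hardest part will be precisely this identification step: one must engineer enough admissible test fields $\{\nabla w^{\ve,\lambda}_Q\}$ so that Minty's argument determines $a(x,\xi)$ for \emph{every} $\xi\in\R^N$ and a.e.\ $x$, not merely on the range traced out by weak limits of $\nabla u^{\ve_n}_{f_k}$. This forces the simultaneous extraction over both a dense family $\{f_k\}\subset W^{-1,p'}(\Omega)$ and the full parameter $\lambda\in\R^N$ of affine boundary data, and is where the substantive technical work of \cite{Chi1} resides.
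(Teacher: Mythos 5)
The paper does not actually prove this proposition: it is imported verbatim from \cite{Chi1}, Theorem 4.1, so there is no internal proof to compare yours against. On its own terms, your outline is the classical Murat--Tartar/Minty scheme (a priori bounds from (H2)--(H3), diagonal extraction over a dense family of data, oscillating test functions from local Dirichlet problems with affine boundary values, div--curl lemma to pass to the limit in the monotonicity inequality, Lebesgue differentiation to localize). That scheme is essentially how the periodic and almost periodic cases are treated (cf.\ \cite{DF1992} and the formula \eqref{ah} in the paper), and it does work --- but only under hypotheses stronger than the ones in the statement.

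Under (H1)--(H3) alone your argument has two genuine gaps. First, (H1) is plain, not strict, monotonicity, so Browder--Minty gives existence but \emph{not} uniqueness of $u^\ve_f$; the single-valued solution map on which your construction rests is not available. Second, the extension from the countable family $\{f_k\}$ to all $f\in W^{-1,p'}(\Omega)$ needs a modulus of continuity for $f\mapsto u^\ve_f$ that is uniform in $\ve$, and the identity $\int_\Omega (a_\ve(x,\nabla u)-a_\ve(x,\nabla v))\cdot(\nabla u-\nabla v)\,dx=\langle f-g,u-v\rangle$ yields no such modulus without a quantitative lower bound of the type (H8). The same issue resurfaces in the identification step: without an equicontinuity condition such as (H6) you cannot conclude that the limit is represented by a Carath\'eodory function $a(x,\xi)$ defined for \emph{every} $\xi$. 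These obstructions are exactly why the statement only asserts convergence to a \emph{maximal monotone} operator: the proof in \cite{Chi1} is carried out for possibly multivalued operators, with $G$-convergence reformulated as a localized sequential convergence of the graphs in $(L^{p}(\Omega))^N\times(L^{p'}(\Omega))^N$, and compactness is obtained at the level of graphs rather than of solution maps; recovering a single-valued Carath\'eodory coefficient in the limit requires the full set (H0)--(H8) and is the content of the stability theorem of \cite{Con} quoted just above. If you add (H6) and (H8) to your hypotheses, your div--curl/Minty identification becomes a legitimate alternative proof; as written for (H1)--(H3) it does not close.
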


In the one dimensional setting the $G-$limit is easily computed. In fact we have the following
fairly easy proposition. For $p=2$ this is well known, see \cite{Allaire-book} and for general
$p$ the extension is straightforward

\begin{prop}\label{G.conver.perio}
Let $\A_\ve u := -(a_\ve(x)|u'|^{p-2}u')'$ with $a_\ve \in L^\infty(\R)$ that satisfies
\begin{equation}\label{cota.a}
\alpha \le a_\ve(x)\le \beta,
\end{equation}
for some constants $\alpha,\beta>0$. Then, up to a subsequence, $\A_\ve$ $G-$converges to $\A u := -(a_p^*(x)|u'|^{p-2}u')'$, with $a_p^*\in L^{\infty}(\R)$ given by
$$
a_p^* = \bar a_p^{-(p-1)} \qquad \text{and}\qquad a_\ve^{-\frac{1}{p-1}} \stackrel{*}{\rightharpoonup} \bar a_p.
$$
\end{prop}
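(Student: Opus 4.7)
The plan is to exploit the one-dimensional structure to integrate the equation explicitly, reducing the statement of $G$-convergence to a passage to the limit in a product of a weakly-$\ast$ convergent $L^\infty$-sequence with a strongly convergent one in $L^{p'}$. Write $\Omega=(x_1,x_2)$, fix $f_\ve\to f$ strongly in $W^{-1,p'}(\Omega)$ and let $u^\ve\in W^{1,p}_0(\Omega)$ solve $\A_\ve u^\ve=f_\ve$. The coercivity \eqref{cota.a} gives the uniform bound $\|u^\ve\|_{W^{1,p}_0}\le C$. Since in one dimension every element of $W^{-1,p'}(\Omega)$ admits a primitive in $L^{p'}(\Omega)$, I would pick $F_\ve\to F$ strongly in $L^{p'}(\Omega)$ with $F_\ve'=f_\ve$. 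Setting $\sigma_\ve:=a_\ve|u^\ve{}'|^{p-2}u^\ve{}'\in L^{p'}(\Omega)$, the distributional identity $-\sigma_\ve'=f_\ve$ integrates to
\[
\sigma_\ve(x)=c_\ve-F_\ve(x),
\]
and the mean-value formula $c_\ve=|\Omega|^{-1}\int_\Omega(\sigma_\ve+F_\ve)\,dx$, combined with the uniform $L^{p'}$-bounds on $\sigma_\ve$ and $F_\ve$, shows that $\{c_\ve\}$ is bounded in $\R$.

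Next I would invert the pointwise identity $|u^\ve{}'|^{p-2}u^\ve{}'=\sigma_\ve/a_\ve$ using the $\tfrac{1}{p-1}$-homogeneous odd map $\psi(s):=|s|^{(2-p)/(p-1)}s$ (the inverse of $t\mapsto|t|^{p-2}t$), which yields
\[
u^\ve{}'(x)=a_\ve(x)^{-1/(p-1)}\,\psi(\sigma_\ve(x)).
\]
Extracting a subsequence so that $c_\ve\to c\in\R$ and $a_\ve^{-1/(p-1)}\cde\bar a_p$ in $L^\infty(\Omega)$, with $\beta^{-1/(p-1)}\le\bar a_p\le\alpha^{-1/(p-1)}$ a.e., gives $\sigma_\ve\to\sigma:=c-F$ strongly in $L^{p'}(\Omega)$ and, along a further subsequence, $\psi(\sigma_\ve)\to\psi(\sigma)$ a.e. The dominating bound $|\psi(\sigma_\ve)|\le(|F_\ve|+|c_\ve|)^{1/(p-1)}$ is uniformly bounded in $L^p(\Omega)$ since $|F|^{1/(p-1)}\in L^{p'(p-1)}=L^p$, so dominated convergence yields $\psi(\sigma_\ve)\to\psi(\sigma)$ strongly in $L^p(\Omega)$.

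Combining this strong $L^p$-convergence with the weak-$\ast$ $L^\infty$-convergence of $a_\ve^{-1/(p-1)}$---testing against any $\varphi\in L^{p'}(\Omega)$ and using dominated convergence for $\psi(\sigma_\ve)\varphi$---I would obtain
\[
u^\ve{}'\cd \bar a_p(x)\,\psi(\sigma(x))\qquad\text{weakly in }L^p(\Omega),
\]
the limit being equal to $u'$ because $u^\ve\cd u$ in $W^{1,p}_0(\Omega)$. Inverting once more, via the positivity of $\bar a_p$ and the $(p-1)$-homogeneity of $t\mapsto|t|^{p-2}t$,
\[
\sigma=\bar a_p^{-(p-1)}|u'|^{p-2}u'=a_p^*(x)|u'|^{p-2}u',
\]
so the identity $-\sigma'=f$ gives $-(a_p^*|u'|^{p-2}u')'=f$ in $\D'(\Omega)$ with $u\in W^{1,p}_0(\Omega)$; the flux $\sigma_\ve$ in fact converges \emph{strongly} (not merely weakly) to $a_p^*|u'|^{p-2}u'$ in $L^{p'}(\Omega)$, completing the verification of $G$-convergence.

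The main obstacle is that the product $a_\ve^{-1/(p-1)}\psi(\sigma_\ve)$ pairs a weakly-$\ast$ convergent factor with a \emph{nonlinear} image of $\sigma_\ve$, so one cannot simply pass the limit through $\psi$. The crux is therefore the one-dimensional integration $\sigma_\ve=c_\ve-F_\ve$, which upgrades the weak $L^{p'}$-convergence of the fluxes (all that energy estimates alone yield) to strong convergence; without this upgrade strong convergence of $\psi(\sigma_\ve)$ in $L^p$ would fail and the explicit formula $a_p^*=\bar a_p^{-(p-1)}$ could not be read off.
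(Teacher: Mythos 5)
Your proof is correct and follows essentially the same route as the paper: integrate the equation once to get $a_\ve|u_\ve'|^{p-2}u_\ve'=c_\ve-F_\ve$, invert the monotone map, and pass to the limit using the strong $L^{p'}$-convergence of the primitives against the weak-$*$ convergence of $a_\ve^{-1/(p-1)}$. In fact you supply details the paper leaves implicit (boundedness of $c_\ve$, the strong $L^p$-convergence of $\psi(\sigma_\ve)$, the final inversion identifying $a_p^*$, and the convergence of the fluxes), and you correctly place the primitives in $L^{p'}$ where the paper writes $L^p$.
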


\begin{proof}
Let $f_\ve\in W^{-1,p'}(I)$ be such that $f_\ve\to f$ in $W^{-1,p'}(I)$.

Let  $g_\ve\in L^p(I)$ be such that $g'_\ve = f_\ve$ and $g_\ve \to g$ in $L^p(I)$. Hence $g'=f$.

Let $u_\ve$ be the weak solution to
$$
\begin{cases}
-(a_{\ve}(x)|u_\ve'|^{p-2}u_\ve')' = f_\ve & \mbox{in } I\\
u_\ve(0)=u_\ve(1)=0
\end{cases}
$$
Then, there exists a constant $c_\ve$ such that $a_\ve(x)|u_\ve'|^{p-2}u_\ve' = c_\ve - g_\ve$.

Let $\varphi_p(x) = |x|^{p-2}x$. Then $\varphi_p$ is invertible and so
\begin{equation}\label{uprima}
u_\ve' = \varphi_p^{-1}(c_\ve-g_\ve) a_\ve(x)^{-\frac{1}{p-1}}.
\end{equation}
Since $(u_\ve)_{\ve>0}$ is bounded in $W^{1,p}_0(I)$, we can assume that is weakly convergent to some $u\in W^{1,p}_0(I)$ and, since $a_\ve$ is bounded away from zero and infinity so is $a_\ve^{-\frac{1}{p-1}}$, so we can assume that there exists $\bar a_p\in L^\infty(I)$ such that
$$
a_\ve^{-\frac{1}{p-1}} \stackrel{*}{\rightharpoonup} \bar a_p.
$$
Moreover, we can assume that $g_\ve\to g$ in $L^p(I)$, and that $c_\ve\to c$.

Now we can pass to the limit in \eqref{uprima} and obtain
$$
u' = \varphi_p^{-1}(c-g)\bar a_p(x)
$$
The proof is now complete.
\end{proof}


\section{Properties of the eigenvalues and eigenfunctions}

In this section we prove the main results of the paper, namely we study the properties of the spectrum $\Sigma$ of the following (nonlinear) eigenvalue problem
\begin{equation} \label{eps1}
\begin{cases}
-\diver(a(x,\nabla u) ) = \lam \rho |u|^{p-2}u &\quad \textrm{ in } \Omega \\
u = 0  &\quad \textrm{ on } \partial \Omega \\
\end{cases}
\end{equation}
where $a(x,\xi)$ verifies (H0)--(H8) and
\begin{equation}\label{cota.rho}
0<\rho^-\le \rho(x)\le \rho^+<\infty\qquad \mbox{ a.e. in } \Omega.
\end{equation}
As we mentioned in the introduction we extend here to \eqref{eps1} the results that are well-known for the $p-$Laplacian case.

The methods in the proofs here very much resembles the ones used for the $p-$Laplacian
and we refer the reader to the articles \cite{ACM, AT, Anane, KawL, Lind}.

\medskip

We recall that the spectrum $\Sigma$ is defined by
$$
\Sigma := \{\lam\in\R\colon \mbox{there exists } u\in W^{1,p}_0, \mbox{ nontrivial solution to \eqref{eps1}}\}.
$$

We begin with this proposition
\begin{prop}\label{Sigma.cerrado}
The spectrum $\Sigma$ of \eqref{eps1} is closed and, moreover, $\Sigma \subset (0,\infty)$.
\end{prop}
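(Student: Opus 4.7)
\medskip

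\textbf{Plan of proof.} The plan is to handle positivity and closedness separately, with the closedness part relying on a standard monotone-operator (Minty) argument adapted to the $p$-homogeneous setting.

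For the inclusion $\Sigma\subset(0,\infty)$: given $\lambda\in\Sigma$ with a nontrivial eigenfunction $u\in W^{1,p}_0(\Omega)$, I would test the equation against $u$ itself to get
$$
\int_\Omega a(x,\nabla u)\cdot\nabla u\,dx = \lambda\int_\Omega \rho(x)|u|^p\,dx.
$$
By (H2) the left-hand side is bounded below by $\alpha\|\nabla u\|_p^p>0$, while by \eqref{cota.rho} the integral $\int\rho|u|^p$ is finite and strictly positive, forcing $\lambda>0$.

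For closedness: let $\lambda_n\in\Sigma$ with $\lambda_n\to\lambda$, and pick corresponding nontrivial eigenfunctions $u_n$. Using the $p$-homogeneity (H4) of the equation, I normalize so that $\int_\Omega \rho|u_n|^p\,dx=1$. Testing with $u_n$ gives $\alpha\|\nabla u_n\|_p^p\le\lambda_n$, so $\{u_n\}$ is bounded in $W^{1,p}_0(\Omega)$. Extracting a subsequence, $u_n\rightharpoonup u$ in $W^{1,p}_0(\Omega)$, $u_n\to u$ strongly in $L^p(\Omega)$ by Rellich--Kondrachov, and a.e.\ in $\Omega$. The uniform $L^p$-bound on $|u_n|^{p-2}u_n$ together with a.e.\ convergence yields $\rho|u_n|^{p-2}u_n\to\rho|u|^{p-2}u$ in $L^{p'}(\Omega)$ (Vitali), and hence $f_n:=\lambda_n\rho|u_n|^{p-2}u_n\to f:=\lambda\rho|u|^{p-2}u$ strongly in $W^{-1,p'}(\Omega)$. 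Moreover, strong $L^p$-convergence and the normalization give $\int\rho|u|^p=1$, so $u\not\equiv 0$.

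The main obstacle is passing to the limit in the nonlinear term $\diver(a(x,\nabla u_n))$, since only weak convergence of gradients is available. Here I would invoke Minty's monotonicity trick: by (H1), for every $w\in W^{1,p}_0(\Omega)$,
$$
\langle f_n,u_n-w\rangle - \langle \mathcal{A} w,u_n-w\rangle = \langle \mathcal{A} u_n-\mathcal{A} w,u_n-w\rangle\ge 0.
$$
Since $f_n\to f$ strongly and $u_n\rightharpoonup u$, we have $\langle f_n,u_n-w\rangle\to\langle f,u-w\rangle$, giving in the limit $\langle f-\mathcal{A} w,u-w\rangle\ge 0$ for every $w$. Taking $w=u-t\phi$ with $t>0$ and $\phi\in W^{1,p}_0(\Omega)$, dividing by $t$ and letting $t\to 0^+$, the hemicontinuity of $\mathcal{A}$ (which follows from (H0) and (H3) via dominated convergence) yields $\langle f-\mathcal{A} u,\phi\rangle\ge 0$ for all $\phi$, hence $\mathcal{A} u=f$. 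This means $u$ solves \eqref{eps1} with parameter $\lambda$, and since $u\not\equiv 0$ we conclude $\lambda\in\Sigma$, proving $\Sigma$ is closed.
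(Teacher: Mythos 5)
Your proof is correct and follows essentially the same route as the paper: positivity by testing with $u$ and invoking (H2), and closedness via normalization, weak compactness, and Minty's monotonicity trick with the test function $w=u-t\phi$ followed by $t\to 0^+$. The only cosmetic difference is that you pass to the limit through strong convergence of the right-hand sides $f_n$ in $W^{-1,p'}(\Omega)$, whereas the paper works with the weak $L^{p'}$-limit $\eta$ of $a(x,\nabla u_n)$; these are equivalent formulations of the same argument.
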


\begin{proof}
First, observe that (H2) trivially implies that $\Sigma\subset (0,+\infty)$. In fact, if $\lambda\in\Sigma$ and $u\in W^{1,p}_0(\Omega)$ is an eigenfunction associated to $\lambda$, then we have, from (H2)
\begin{equation}\label{facil}
\alpha \int_\Omega |\nabla u|^p\, dx \le \int_{\Omega} a(x,\nabla u)\nabla u\, dx = \lambda \int_\Omega \rho(x) |u|^p\, dx,
\end{equation}
from where it follows that
$$
\lambda \ge \frac{\alpha\int_\Omega |\nabla u|^p\, dx}{\int_\Omega \rho(x) |u|^p\, dx}>0.
$$

The fact that $\Sigma$ is closed follows from the monotonicity of the operator $\A$. In fact, let $\lambda_j\in \Sigma$ be such that $\lam_j\to\lam$ and let $u_j\in W^{1,p}_0(\Omega)$ be an eigenfunction associated to $\lam_j$. We can assume, from (H4), that $u_j$ is chosen so that  $\|u_j\|_{L^p(\Omega)}=1$. Then, since $\{\lam_j\}_{j\in\N}$ is bounded, from \eqref{facil} it follows that $\{u_j\}_{j\in\N}$ is bounded in $W^{1,p}_0(\Omega)$.

Therefore, taking a subsequence if necessary, we have that there exists $u\in W^{1,p}_0(\Omega)$ and, from (H3) that there exists $\eta\in (L^{p'}(\Omega))^N$ such that
\begin{align*}
u_j\rightharpoonup u &\quad \text{weakly in } W^{1,p}_0(\Omega)\\
u_j\to u & \quad \text{strongly in } L^p(\Omega) \text{ and a.e. in }\Omega\\
a(x,\nabla u_j) \rightharpoonup \eta & \quad \text{weakly in } L^{p'}(\Omega)
\end{align*}
From these convergences we obtain that $\|u\|_{L^p(\Omega)}=1$ (so that $u\neq 0$) and
\begin{equation}\label{eq.eta}
\int_\Omega \eta\nabla v\, dx = \lambda \int_\Omega \rho(x) |u|^{p-1}u v\, dx
\end{equation}
for every $v\in W^{1,p}_0(\Omega)$. So, the proof will be finished if we show that
\begin{equation}\label{eq.eta2}
\int_\Omega a(x,\nabla u)\nabla v\, dx = \int_\Omega \eta\nabla v\, dx
\end{equation}
for every $v\in W^{1,p}_0(\Omega)$. For this purpose, we make use of the monotonicity inequality (H1) and the fact that $u_j$ is an eigenfunction associated to $\lambda_j$. In fact, for every $w\in W^{1,p}_0(\Omega)$,
\begin{align*}
0&\le \int_\Omega (a(x,\nabla u_j) - a(x,\nabla w))(\nabla u_j-\nabla w)\, dx\\
& = \int_{\Omega}\lam_j\rho(x) |u_j|^{p-2}u_j (u_j-w)\, dx - \int_\Omega a(x,\nabla w)(\nabla u_j - \nabla w)\, dx.
\end{align*}
Taking the limit $j\to\infty$ in the former inequality, we get, using \eqref{eq.eta},
\begin{align*}
0&\le \int_{\Omega}\lam \rho(x) |u|^{p-2}u (u-w)\, dx - \int_\Omega a(x,\nabla w)(\nabla u - \nabla w)\, dx\\
& = \int_\Omega \eta(\nabla u - \nabla w)\, dx  - \int_\Omega a(x,\nabla w)(\nabla u - \nabla w)\, dx.
\end{align*}
So, if we take $w=u-tv$ with $v\in W^{1,p}_0(\Omega)$ given and $t>0$, we immediately get
$$
0\le \int_\Omega (\eta - a(x,\nabla u - t\nabla v))\nabla v\, dx,
$$
and taking $t\to 0+$, we arrive at
$$
0\le \int_\Omega (\eta - a(x,\nabla u))\nabla v\, dx.
$$
From this inequality is easy to see that \eqref{eq.eta2} holds and so the claim follows.
\end{proof}

The existence of a sequence of variational eigenvalues for \eqref{eps1} can be traced back to
the papers of F. Browder, as we pointed out before. We state the result here for further
reference.

\begin{thm}\label{teo.lamk}
Let $\{\lam_k\}_{k\in\N}$ be the sequence defined by
$$
\lam_k = \inf_{C\in \C_k} \sup_{v \in C} \frac{\int_\Omega  \Phi(x,\nabla v)}{\int_\Omega \rho |v|^p}
$$
where $\Phi(x,\xi)$ is the potential function given in Proposition \ref{potential.f},
$$
\C_k=\{C\subset W^{1,p}_0(\Omega) : C \textrm{ closed, } C=-C, \,  \, \gamma(C)\geq k\}
$$
and $\gamma(C)$ is the Kranoselskii genus.

Then $\{\lam_k\}_{k\in\N}\subset \Sigma$ and $\lam_k\to\infty$ as $k\to\infty$.
\end{thm}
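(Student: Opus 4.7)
The plan is to set up a constrained minimax problem on a $C^1$ Banach manifold and apply the Lyusternik-Schnirelmann theory through the Krasnoselskii genus, exactly as in the classical $p$-Laplacian argument but with the potential $\Phi$ in place of $\frac{1}{p}|\xi|^p$.

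First I would define the energy
\begin{equation*}
J(v) := \int_\Omega \Phi(x,\nabla v)\, dx, \qquad v\in W^{1,p}_0(\Omega),
\end{equation*}
and the constraint manifold
\begin{equation*}
M := \Bigl\{v\in W^{1,p}_0(\Omega) \colon \int_\Omega \rho(x)|v|^p\, dx = 1\Bigr\}.
\end{equation*}
By Proposition \ref{potential.f}, $\Phi(x,\cdot)$ is even, $p$-homogeneous, strictly convex and $C^1$ with $\nabla_\xi\Phi = p\,a(x,\xi)$, so $J$ is an even $C^1$ functional and, by (H2)--(H3) together with \eqref{cont.coer.phi}, coercive on $W^{1,p}_0(\Omega)$. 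The constraint $G(v):=\int_\Omega \rho|v|^p\,dx - 1$ is $C^1$ with $G'(v)\neq 0$ on $M$, so $M$ is a symmetric $C^1$ Banach manifold of codimension one. A critical point $u\in M$ of $J|_M$ satisfies, by Lagrange multipliers, $J'(u)=\mu\, G'(u)$, which after using $\nabla_\xi\Phi = p\,a(x,\xi)$ reads exactly as \eqref{eps1} with $\lambda = \mu$, and testing against $u$ together with $p$-homogeneity gives $\mu = J(u)$.

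Next I would verify the Palais--Smale condition on $M$. Given $\{u_j\}\subset M$ with $J(u_j)$ bounded and $(J|_M)'(u_j)\to 0$, the coercivity \eqref{cont.coer.phi} gives boundedness in $W^{1,p}_0(\Omega)$, so along a subsequence $u_j\rightharpoonup u$ weakly in $W^{1,p}_0$ and strongly in $L^p(\Omega)$, and $a(x,\nabla u_j)\rightharpoonup \eta$ in $(L^{p'}(\Omega))^N$ by (H3). The key step is to upgrade the weak convergence of $\nabla u_j$ to strong convergence: writing $\mu_j := J(u_j)$ (which one checks converges to some $\mu$), the Palais--Smale condition implies
\begin{equation*}
\int_\Omega a(x,\nabla u_j)(\nabla u_j - \nabla u)\, dx = \mu_j \int_\Omega \rho |u_j|^{p-2}u_j(u_j-u)\, dx + o(1) \to 0,
\end{equation*}
and combined with the monotonicity trick carried out in the proof of Proposition \ref{Sigma.cerrado} (testing the monotonicity inequality with $w=u$ and passing to the limit), this identifies $\eta=a(x,\nabla u)$ a.e. and forces $\nabla u_j\to\nabla u$ strongly through (H1) (or the strict monotonicity (H8), if needed to deduce pointwise gradient convergence).

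Once (PS) is in hand, the standard Lyusternik--Schnirelmann theorem for even $C^1$ functionals on symmetric $C^1$ manifolds provides an unbounded sequence of critical values
\begin{equation*}
\lambda_k = \inf_{C\in \C_k}\sup_{v\in C} J(v),
\end{equation*}
where $\C_k$ is the class of symmetric compact subsets of $M$ with Krasnoselskii genus at least $k$; by Lagrange multipliers each $\lambda_k$ belongs to $\Sigma$. Finally, $\lambda_k\to\infty$ follows by a standard genus argument: if some subsequence stayed bounded, the corresponding eigenfunctions would be bounded in $W^{1,p}_0(\Omega)$, hence precompact in $L^p(\Omega)$, and one would obtain a compact symmetric subset of $M$ of arbitrarily large genus contained in a finite-dimensional neighborhood, contradicting the basic properties of the genus.

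I expect the Palais--Smale verification to be the only delicate point; every other step is a direct transcription of the $p$-Laplacian argument once $\Phi$ has the properties guaranteed by Proposition \ref{potential.f}, and the monotonicity trick needed in (PS) is already rehearsed in the proof of Proposition \ref{Sigma.cerrado}.
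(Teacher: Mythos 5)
The paper itself gives no proof of Theorem \ref{teo.lamk}: the result is stated ``for further reference'' and attributed to Browder \cite{Browder1, Browder2, Browder3, Browder4} (see also \cite{GAP0, Rab2}). Your proposal reconstructs exactly the argument of those references --- constrained Lyusternik--Schnirelmann theory for the even $C^1$ functional $J(v)=\int_\Omega\Phi(x,\nabla v)$ on the manifold $M=\{\int_\Omega\rho|v|^p=1\}$, with the Palais--Smale condition obtained from the monotonicity trick --- so in the only meaningful sense it is the same approach, and it is correct in outline. Three points should be tightened. First, your admissible class (compact symmetric subsets of $M$) is not literally the paper's $\C_k$ (closed symmetric subsets of $W^{1,p}_0(\Omega)$ of genus $\ge k$); by the $0$-homogeneity of the Rayleigh quotient and standard approximation the two min-max values coincide, but this identification deserves a line. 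Second, in the (PS) verification plain monotonicity (H1) is \emph{not} sufficient to pass from $\int_\Omega(a(x,\nabla u_j)-a(x,\nabla u))(\nabla u_j-\nabla u)\,dx\to 0$ to strong $W^{1,p}_0$ convergence --- the degenerate case $(a(x,\xi_1)-a(x,\xi_2))(\xi_1-\xi_2)=0$ with $\xi_1\neq\xi_2$ is not excluded --- so the strict monotonicity (H8) is essential there, not an optional alternative as your parenthesis suggests. Third, for $\lam_k\to\infty$ your genus argument is garbled as written (boundedness of ``the corresponding eigenfunctions'' does not by itself produce a set of large genus inside one of finite genus; one must argue with the sublevel sets $\{J\le c\}\cap M$, whose $L^p$-closure is compact and avoids the origin, hence has finite genus); the cleaner route, and the one the paper in effect uses in the theorem that follows, is the comparison $\lam_k\ge(\alpha/\rho^+)\mu_k$ coming from \eqref{cont.coer.phi} and \eqref{cota.rho}, with $\mu_k$ the $k$-th variational eigenvalue of the $p$-Laplacian, whose divergence is classical \cite{GAP, Friedlander}.
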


We refer the reader to \cite{Rab2} for the definition and properties of $\gamma$.

As for the asymptotic behavior of the sequence $\Sigma_{\text{var}} = \{\lambda_k\}_{k\in\N}$ this follows easily from the variational characterization given in Theorem \ref{teo.lamk}, the coercivity inequality \eqref{cont.coer.phi} and the asymptotic behaviors for the eigenvalues of the $p-$Laplacian found in \cite{GAP} and refined in \cite{Friedlander}.

More precisely we have
\begin{thm}
There exists $c,C>0$ depending only on $p,N$ such that
$$
c\frac{\alpha}{\rho^+}\left(\frac{k}{|\Omega|}\right)^{\frac{p}{N}}\le \lam_k\le C\frac{\beta}{\rho^-}\left(\frac{k}{|\Omega|}\right)^{\frac{p}{N}},
$$
where $\alpha,\beta$ are given in \eqref{cont.coer.phi} and $\rho_-, \rho_+$ are given in \eqref{cota.rho}.
\end{thm}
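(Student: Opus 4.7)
The plan is to reduce the asymptotic estimate for $\lambda_k$ to the corresponding Weyl-type bounds already known for the $p$-Laplacian eigenvalues. The bridge is the variational characterization in Theorem \ref{teo.lamk} together with the two-sided bound \eqref{cont.coer.phi} on the potential $\Phi$.

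First I would note that, since $\Phi$ satisfies $\alpha|\xi|^p\le \Phi(x,\xi)\le \beta|\xi|^p$ and $\rho$ satisfies \eqref{cota.rho}, for every nontrivial $v\in W^{1,p}_0(\Omega)$ we have the pointwise-in-$v$ estimate
$$
\frac{\alpha}{\rho^+}\cdot\frac{\int_\Omega |\nabla v|^p\,dx}{\int_\Omega |v|^p\,dx}
\;\le\;\frac{\int_\Omega \Phi(x,\nabla v)\,dx}{\int_\Omega \rho(x)|v|^p\,dx}
\;\le\;\frac{\beta}{\rho^-}\cdot\frac{\int_\Omega |\nabla v|^p\,dx}{\int_\Omega |v|^p\,dx}.
$$
Since these inequalities hold uniformly in $v$, they are preserved after taking sup over $v\in C$ and inf over $C\in\C_k$. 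Denoting by $\mu_k(\Omega)$ the $k$-th Lyusternik--Schnirelmann eigenvalue of the Dirichlet $p$-Laplacian on $\Omega$ (which corresponds to the choice $a(x,\xi)=|\xi|^{p-2}\xi$, $\rho\equiv 1$, hence $\Phi(x,\xi)=|\xi|^p$), this yields
$$
\frac{\alpha}{\rho^+}\,\mu_k(\Omega)\;\le\;\lambda_k\;\le\;\frac{\beta}{\rho^-}\,\mu_k(\Omega).
$$

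Next I would invoke the Weyl-type two-sided bound for the $p$-Laplacian eigenvalues established in \cite{GAP} (upper bound via explicit test subspaces built on disjoint balls and Krasnoselskii genus, and lower bound for $1<p<\infty$ extended in \cite{Friedlander}): there exist constants $c,C>0$ depending only on $N$ and $p$ such that
$$
c\left(\frac{k}{|\Omega|}\right)^{p/N}\;\le\;\mu_k(\Omega)\;\le\;C\left(\frac{k}{|\Omega|}\right)^{p/N}.
$$
Combining the two displays gives the desired inequality, with the constants $c,C$ absorbed from the $p$-Laplacian case and the explicit structural constants $\alpha,\beta,\rho^\pm$ carried outside.

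There is no real obstacle here; the only point that requires minor care is verifying that the infimum--supremum structure genuinely preserves the inequalities (this is immediate because the bounds are pointwise in $v$, and the class $\C_k$ is the same for both Rayleigh quotients since it depends only on the ambient space $W^{1,p}_0(\Omega)$ and the genus, not on $a$ or $\rho$). The deeper ingredient, the Weyl asymptotics for $\mu_k$, is imported as a black box from \cite{GAP, Friedlander}.
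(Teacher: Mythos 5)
Your proof is correct and is essentially identical to the paper's: the same pointwise comparison of Rayleigh quotients via \eqref{cont.coer.phi} and \eqref{cota.rho}, the same sandwich $\frac{\alpha}{\rho^+}\mu_k\le\lambda_k\le\frac{\beta}{\rho^-}\mu_k$, and the same appeal to the Weyl bounds for the $p$-Laplacian from \cite{GAP, Friedlander}. Nothing to add.
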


\begin{proof}
From \eqref{cont.coer.phi} and \eqref{cota.rho} it follows that, for every $v\in W^{1,p}_0(\Omega)$ we have
$$
\frac{\alpha}{\rho^+}\frac{\int_\Omega |\nabla v|^p\, dx}{\int_\Omega |v|^p\, dx}\le  \frac{\int_\Omega  \Phi(x,\nabla v)}{\int_\Omega \rho |v|^p} \le \frac{\beta}{\rho^-}\frac{\int_\Omega |\nabla v|^p\, dx}{\int_\Omega |v|^p\, dx}.
$$
From these inequalities and the variational characterization of $\Sigma_{\text{var}}$ we obtain
$$
\frac{\alpha}{\rho^+}\mu_k\le \lam_k\le \frac{\beta}{\rho^-}\mu_k,
$$
where $\{\mu_k\}_{k\in\N}$ are the variational eigenvalues of the $p-$Laplacian. Now, the conclusion of the Theorem follows from the Weyl's asymptotic formula for $\{\mu_k\}_{k\in\N}$ proved in \cite{Friedlander}.
\end{proof}

The following maximum principle for quasilinear operators was proved in \cite{Kaw-Lu-Pra}
and it will be most useful in the sequel.
\medskip
\begin{thm}[\cite{Kaw-Lu-Pra}, Section 6.2]\label{SMP}
Assume that $u\in W^{1,p}_{\text{loc}}(\Omega)$ satisfies
$$
\int_\Omega a(x,\nabla u)\nabla \phi -\rho|u|^{p-2}u \phi \geq 0,\quad \forall \phi \in C_0^\infty(\Omega),\ \phi \geq 0.
$$
Consider its zero set
$$
\mathfrak{Z}:=\{x\in \Omega\colon \tilde u(x)=0\},
$$
where $\tilde u$ is the $p-$quasi continuous representative of $u$.

Then, either $Cap_p(\mathfrak{Z})=0$ or $u=0$.
\end{thm}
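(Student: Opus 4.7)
The plan is to reduce to the case of non-negative weak supersolutions of $\mathcal{A}$ and then apply a weak Harnack inequality in the Trudinger-Serrin-Moser tradition.

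First, testing the given supersolution inequality with $\phi = \psi\,\theta_\epsilon(u)$, where $\theta_\epsilon$ is a smooth non-decreasing approximation of $\chi_{[0,\infty)}$ and $\psi\in C_c^\infty(\Omega)$ with $\psi\geq 0$, and letting $\epsilon\to 0$, I would show that $v:=u_+$ is itself a weak supersolution of $-\diver(a(x,\nabla v))\geq \rho v^{p-1}\geq 0$; in particular $v$ is a non-negative weak supersolution of the homogeneous inequality $-\diver(a(x,\nabla v))\geq 0$. Using $\{u=0\}\subset\{u_+=0\}$ (and an analogous reduction for $u_-$ via the oddness hypothesis (H5)), it suffices to prove: a non-trivial non-negative weak supersolution $v$ cannot vanish on a set of positive $p$-capacity.

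For such $v$, I would derive a family of Caccioppoli-type estimates by testing with $\phi=(v+\delta)^{q}\eta^p$ for $q<0$ and smooth cutoffs $\eta$. The hypotheses (H2) (bounding $a(x,\nabla v)\nabla v$ below by $\alpha|\nabla v|^p$) and (H3) (bounding $|a(x,\nabla v)|$ above by $\beta|\nabla v|^{p-1}$) reproduce the algebraic inequalities driving the classical Moser iteration for the $p$-Laplacian. Iteration in negative exponents then yields the weak Harnack inequality
\begin{equation*}
\mathop{\mathrm{ess\,inf}}_{B_r}\tilde v \;\geq\; C\left(\frac{1}{|B_{2r}|}\int_{B_{2r}}v^s\,dx\right)^{1/s}
\end{equation*}
for some $s>0$ and concentric balls $B_r\subset B_{2r}$ with $\overline{B_{2r}}\subset\Omega$.

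Suppose $u\not\equiv 0$; we must show $Cap_p(\mathfrak{Z})=0$. After the reduction we may assume $v=u_+\not\equiv 0$ (the case $u\leq 0$ is handled symmetrically) and that $\{\tilde v=0\}$ has positive $p$-capacity in some ball $B_r$ with $\overline{B_{2r}}\subset\Omega$. Since the zero set of the quasi-continuous $\tilde v$ has positive $p$-capacity in $B_r$, one has $\mathop{\mathrm{ess\,inf}}_{B_r}\tilde v=0$; the weak Harnack inequality then forces $v\equiv 0$ on $B_{2r}$, and a standard covering/connectedness argument propagates this to $v\equiv 0$ on $\Omega$, contradicting $v\not\equiv 0$. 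The main obstacle is executing the Moser iteration cleanly with the general nonlinearity $a(x,\xi)$ in place of $|\xi|^{p-2}\xi$: one must verify at each step that (H2)--(H3) suffice to reproduce the algebraic structure of the $p$-Laplace iteration and that the reaction term $\rho v^{p-1}$ is absorbable. These details are carried out in \cite{Kaw-Lu-Pra}, Section 6.2, whose argument our proof would follow.
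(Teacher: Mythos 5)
First, a point of reference: the paper does not prove this theorem at all --- it is quoted verbatim from \cite{Kaw-Lu-Pra}, and in that reference (and in every subsequent use in this paper, where it is applied to $u^+$) the hypothesis $u\geq 0$ is present. Your outline of the core argument for \emph{non-negative} supersolutions --- Caccioppoli estimates with negative powers $(v+\delta)^{q}\eta^p$, Moser iteration to a weak Harnack inequality using only (H2)--(H3), the observation that $\rho v^{p-1}\ge 0$ lets one pass to the homogeneous inequality, and the fact that a positive essential infimum on a ball forces the quasi-continuous representative to be positive quasi-everywhere there --- is exactly the standard route and is what \cite{Kaw-Lu-Pra} carries out.

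The genuine gap is your reduction to $v=u_+$. Testing with $\phi=\psi\,\theta_\epsilon(u)$ gives
$$
\int_\Omega \theta_\epsilon(u)\,a(x,\nabla u)\nabla\psi \;\geq\; \int_\Omega \rho|u|^{p-2}u\,\psi\,\theta_\epsilon(u)\;-\;\int_\Omega \psi\,\theta_\epsilon'(u)\,a(x,\nabla u)\nabla u ,
$$
and the last integral is \emph{non-negative} by (H2), so it is being subtracted with the wrong sign: you cannot discard it to conclude that $u_+$ is a supersolution. Indeed $u_+=\max(u,0)$ is the maximum of two supersolutions, which for these monotone operators is generically a \emph{sub}solution (take $p=2$, $a(x,\xi)=\xi$, $u(x)=x$ on $(-1,1)$: then $-(x_+)''=-\delta_0\le 0$). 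Worse, the statement you are trying to prove is actually false without $u\ge 0$: on $(-1,1)$ with $p=2$ the function $u(x)=-x_+^2$ satisfies $\int u'\phi'-\rho u\phi=2\int_0^1\phi+\int\rho x_+^2\phi\ge 0$ for all $\phi\ge 0$, yet $u\not\equiv 0$ and its zero set $(-1,0]$ has positive capacity; sign-changing eigenfunctions give the same phenomenon. So the ``symmetric'' treatment of $u_-$ cannot be rescued either ($-u$ is a subsolution, to which the weak Harnack inequality does not apply). The fix is not technical but structural: add the hypothesis $u\ge 0$ (as in \cite{Kaw-Lu-Pra}), drop the reduction entirely, and run your Harnack argument directly on $u$.
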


For the properties of the $p-$capacity and the $p-$quasi continuous representative of a Sobolev functions, we refer to \cite{Evans-Gariepy}.

\bigskip
The following result gives the positivity of the first eigenfunction.
\begin{thm}[\cite{Kaw-Lu-Pra}, Proposition 5.3] Let $u$ be an eigenfunction corresponding to $\lam_1$. Then exactly one of the following alternative holds:
$$u>0  \qquad or \qquad u<0$$
and the set of zeroes of $u$ satisfies
$$Cap_p(\{u=0\})=0.$$

\begin{proof}
  Assume that $u^+ \nequiv 0$ and let us show then that $u^-\equiv 0$.

First observe that $a(x,\xi)\xi = \Phi(x,\xi)$. This fact follows from the homogeneity of $\Phi$ and Euler's differentiation formula for homogeneous mappings.

By using $u^+$  as test function  in \eqref{1.1}   we deduce that
$$\int_\Omega \Phi( x,\nabla u^+) = \lam_1 \int_\Omega \rho |u^+|^p$$
and therefore $u^+$ is also an eigenfunction corresponding to $\lam_1$. It satisfies hence \eqref{1.1} and we get
\begin{equation*}
\begin{cases}
-\diver(a(x,\nabla u^+)) = \lam_1 \rho |u^+|^{p-1}, & \text{in }\Omega,\\
u^+\geq 0,\quad u^+ \nequiv 0 & \text{in }\Omega,\\
u=0 & \text{on }\partial\Omega.
\end{cases}
\end{equation*}
By the maximum principle as stated in Theorem \ref{SMP}, we deduce that $u^-\equiv 0$ and $Cap_p(\{u=0\})=0$.
\end{proof}
\end{thm}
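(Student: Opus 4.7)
The plan is to combine the variational characterization of $\lambda_1$ (from Theorem \ref{teo.lamk}) with the strong maximum principle (Theorem \ref{SMP}), applied not to $u$ itself but to its positive part. First I record a useful algebraic consequence of the hypotheses on $a$: by (H4) the potential $\Phi(x,\cdot)$ is $p$-homogeneous, so Euler's identity gives $\nabla_\xi\Phi(x,\xi)\cdot\xi = p\,\Phi(x,\xi)$, and together with $\nabla_\xi\Phi = p\,a$ from Proposition \ref{potential.f} this yields the pointwise identity $a(x,\xi)\cdot\xi = \Phi(x,\xi)$. I also note that, since $\nabla u^{+} = \chi_{\{u>0\}}\nabla u$ a.e.\ (Stampacchia) and $a(x,0)=0$ by $p$-homogeneity, one has $a(x,\nabla u^{+}) = \chi_{\{u>0\}}\,a(x,\nabla u)$ a.e.

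Next, decompose $u = u^{+} - u^{-}$ and assume without loss of generality $u^{+}\not\equiv 0$; the case $u^{-}\not\equiv 0$ is symmetric. Using $u^{+}\in W^{1,p}_{0}(\Omega)$ as a test function in \eqref{eps1} and the identities above, I compute
$$
\int_\Omega \Phi(x,\nabla u^{+})\,dx \;=\; \int_\Omega a(x,\nabla u^{+})\cdot\nabla u^{+}\,dx \;=\; \lambda_1\int_\Omega \rho\,(u^{+})^{p}\,dx,
$$
because $|u|^{p-2}u\cdot u^{+} = (u^{+})^{p}$. Hence $u^{+}$ attains the infimum in the Rayleigh characterization of $\lambda_1$ given by Theorem \ref{teo.lamk} with $k=1$. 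A standard Lagrange-multiplier argument (the functional $v\mapsto \int_\Omega\Phi(x,\nabla v)\,dx$ is $C^{1}$ by Proposition \ref{potential.f}, and $v\mapsto \int_\Omega\rho|v|^{p}\,dx$ is $C^{1}$ with nonvanishing derivative at $u^{+}$) then shows that $u^{+}$ is itself a weak eigenfunction associated to $\lambda_1$:
$$
-\diver\bigl(a(x,\nabla u^{+})\bigr) = \lambda_1 \rho\,(u^{+})^{p-1}\quad\text{in }\Omega,\qquad u^{+}\geq 0.
$$

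Finally, after rewriting this as $-\diver(a(x,\nabla u^{+})) - \tilde\rho\,(u^{+})^{p-1}\geq 0$ with the bounded weight $\tilde\rho := \lambda_1\rho$ (which still satisfies \eqref{cota.rho}), I invoke Theorem \ref{SMP} to obtain $\mathrm{Cap}_{p}(\{u^{+}=0\})=0$. Since $\{u^{-}>0\}\subset\{u^{+}=0\}$, this forces $u^{-}\equiv 0$, whence $u=u^{+}>0$ quasi-everywhere and $\mathrm{Cap}_p(\{u=0\})=0$. The main obstacle I anticipate is the transition from \emph{$u^{+}$ minimizes the Rayleigh quotient} to \emph{$u^{+}$ is a weak eigenfunction}: this is where the convexity and differentiability of $\Phi$ in $\xi$ (Proposition \ref{potential.f}) are essential, since without them one could not rule out that $u^{+}$ is merely a Rayleigh minimizer without satisfying the Euler--Lagrange equation pointwise.
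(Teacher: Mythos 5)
Your proposal is correct and follows essentially the same route as the paper: test the equation with $u^{+}$, use $a(x,\xi)\cdot\xi=\Phi(x,\xi)$ to conclude that $u^{+}$ realizes the infimum of the Rayleigh quotient and is therefore itself an eigenfunction for $\lambda_1$, then apply the strong maximum principle (Theorem \ref{SMP}) to $u^{+}$ to get $\mathrm{Cap}_p(\{u^{+}=0\})=0$ and hence $u^{-}\equiv 0$. The extra details you supply (the identity $a(x,\nabla u^{+})=\chi_{\{u>0\}}a(x,\nabla u)$, the Lagrange-multiplier step, and the replacement of $\rho$ by $\lambda_1\rho$ when invoking Theorem \ref{SMP}) are all points the paper leaves implicit, and you handle them correctly.
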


The following result gives the simplicity of the first eigenvalue. It follows by using a Picone
type identity, see  \cite{AlHu,Anane,  KawL, Lind}. Whenever the eigenfunctions
associated to $\lam_1$ are regular enough, the following  Picone type
identity holds.

\begin{lema} \label{picone}
  Let $v>0$, $u\geq 0$ be two continuous functions in $\Omega$ differentiable a.e. Let us denote
\begin{align*}
&L(u,v)=\Phi(x,\nabla u)+(p-1)\Big(\frac{u}{v}\Big)^p\Phi(x,\nabla v)-\Big(\frac{u}{v}\Big)^{p-1}\langle a(x,\nabla v),\nabla u \rangle,  \\
&R(u,v)=\langle a(x,\nabla u),\nabla u \rangle - \langle a(x,\nabla v), \nabla\Big(\frac{u^p}{v^{p-1}}\Big)\rangle.
\end{align*}
Then, \textbf{(i)} $L(u,v)=R(u,v)$, \textbf{(ii)} $L(u,v)\geq 0$, \textbf{(iii)} $L(u,v)=0$ a.e. in $\Omega$ if and only if $u=cu$ for some $c\in\R$.
\end{lema}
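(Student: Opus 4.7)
The plan is to use the strict convexity and $p$-homogeneity of the potential $\Phi(x,\cdot)$ from Proposition~\ref{potential.f}, together with Euler's differentiation formula. Two structural identities drive the whole argument: from $p$-homogeneity of $\Phi(x,\cdot)$ and $\nabla_\xi\Phi(x,\xi) = p\,a(x,\xi)$ one has $\langle a(x,\xi),\xi\rangle = \Phi(x,\xi)$ (already exploited in the preceding positivity theorem), and (H4) yields $a(x,t\xi) = t^{p-1}a(x,\xi)$ for $t>0$. These let us freely rescale both $\Phi$ and $a$ in their second argument.

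For \textbf{(i)}, I would expand the gradient by the chain rule, which is legitimate a.e.\ on $\Omega$ since $v>0$ everywhere:
$$\nabla\!\left(\frac{u^p}{v^{p-1}}\right) = p\!\left(\frac{u}{v}\right)^{\!p-1}\!\nabla u - (p-1)\!\left(\frac{u}{v}\right)^{\!p}\!\nabla v.$$
Substituting into $R(u,v)$ and replacing $\langle a(x,\nabla u),\nabla u\rangle$ and $\langle a(x,\nabla v),\nabla v\rangle$ by $\Phi(x,\nabla u)$ and $\Phi(x,\nabla v)$ respectively, the grouping of terms recovers $L(u,v)$ term-for-term.

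For \textbf{(ii)}, I would invoke the subdifferential inequality for the convex function $\xi\mapsto\Phi(x,\xi)$, namely
$$\Phi(x,\eta) \;\geq\; \Phi(x,\zeta) + \langle\nabla_\xi\Phi(x,\zeta),\eta-\zeta\rangle \;=\; \Phi(x,\zeta) + p\,\langle a(x,\zeta),\eta-\zeta\rangle,$$
applied with $\eta=\nabla u$ and $\zeta=(u/v)\nabla v$. By the $p$-homogeneity of $\Phi(x,\cdot)$ and (H4) one has $\Phi(x,\zeta) = (u/v)^{p}\Phi(x,\nabla v)$ and $a(x,\zeta) = (u/v)^{p-1}a(x,\nabla v)$; substituting these and rearranging gives $L(u,v)\ge 0$ directly, with the precise coefficients in the statement emerging from the combination of the two cross-terms.

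For \textbf{(iii)}, the easy direction $u=cv \Rightarrow L(u,v)=0$ follows by plugging $\nabla u = c\nabla v$ into $L$ and using the $p$-homogeneity of $\Phi$ and $a$ to collapse the three terms. The converse is the main obstacle: the strict convexity of $\Phi(x,\cdot)$ provided by Proposition~\ref{potential.f} forces equality in the subdifferential inequality to imply $\nabla u = (u/v)\nabla v$ a.e.\ on $\{u>0\}$, equivalently $\nabla(u/v)=0$ a.e.\ there. To pass from this pointwise relation to the global conclusion $u/v\equiv c$, I would use that $u/v$ is continuous on $\Omega$ (since $v>0$ and both $u,v$ are continuous) and apply a connectedness argument on $\Omega$; the delicate part is controlling the behavior near the zero set of $u$, which is handled via the continuity of $u/v$ and the positivity theorem proved immediately before the lemma, ensuring that the constant $c$ is well defined on all of $\Omega$ and hence $u=cv$.
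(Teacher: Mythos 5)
The paper does not actually prove Lemma \ref{picone}: it defers to \cite{AlHu} and \cite{Kaw-Lu-Pra}, remarking only that the argument is the classical Picone computation once enough regularity of the eigenfunctions is granted. Your sketch is precisely that classical computation (expand $\nabla(u^p/v^{p-1})$, use $\langle a(x,\xi),\xi\rangle=\Phi(x,\xi)$ from Euler's formula, and the convexity inequality $\Phi(x,\eta)\ge\Phi(x,\zeta)+p\,\langle a(x,\zeta),\eta-\zeta\rangle$ at $\zeta=(u/v)\nabla v$ together with the homogeneity of $\Phi$ and $a$), so in approach you coincide with the sources the paper relies on.

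Two points need attention, though. First, carry your own expansion to the end: with the paper's normalization $\nabla_\xi\Phi=p\,a$, one gets
$$
R(u,v)=\Phi(x,\nabla u)+(p-1)\Big(\frac{u}{v}\Big)^p\Phi(x,\nabla v)-p\Big(\frac{u}{v}\Big)^{p-1}\langle a(x,\nabla v),\nabla u\rangle,
$$
so the cross term carries a factor $p$, exactly as it does in the convexity inequality. With the coefficient $1$ printed in the statement of $L$, part (i) already fails: take $u=v$, then $R=0$ while $L=(p-1)\Phi(x,\nabla v)$. The statement therefore contains a typo (as does ``$u=cu$'' in (iii)), and your claim that ``the precise coefficients in the statement emerge'' is not accurate --- you in fact prove the corrected identity without noticing the discrepancy; a careful write-up should flag and fix it. Second, in (iii) the passage from $\nabla(u/v)=0$ a.e.\ to $u/v\equiv c$ is not automatic under the stated hypotheses: $u$ and $v$ are only assumed continuous and differentiable a.e., and a continuous function with vanishing derivative a.e.\ need not be constant (Cantor's function). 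What is needed is that $u/v$ be locally absolutely continuous --- e.g.\ $u,v\in W^{1,p}$ with $v$ locally bounded away from zero, which is the situation in which the lemma is applied. This, rather than the behavior near the zero set of $u$, is the genuinely delicate point, and it is exactly the regularity issue the paper acknowledges by referring to \cite{Kaw-Lu-Pra} for the full proof.
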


For the $p-$Laplacian, the regularity of eigenfunctions is known and it is enough to use Picone's identity. For general operators the proof is the same assuming that regularity, and the full proof without this assumption can be found  in \cite{Kaw-Lu-Pra}.

Now, simplicity of the first eigenvalue can be proved with a standard argument by using the Picone's identity given in Lemma \ref{picone}.

\begin{thm} \label{simple}
  Let $u$, $v$ be two eigenfunctions corresponding to $\lam_1$. Then there exists $c\in \R$ such that $u = c v$.
\end{thm}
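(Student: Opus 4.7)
The plan is to reduce to the case where both eigenfunctions are strictly positive and then exploit the Picone-type identity of Lemma \ref{picone} together with the fact that both eigenfunctions satisfy the same variational equation with the same eigenvalue $\lam_1$.

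First, by the previous positivity theorem, each of $u$ and $v$ has constant sign in $\Omega$, with the zero set of each having zero $p$-capacity. Using hypothesis (H5) (oddness of $a$), replacing $u$ by $-u$ and/or $v$ by $-v$ if necessary, we may assume $u>0$ and $v>0$ almost everywhere in $\Omega$. Note also that from (H4) and Euler's identity for homogeneous functions, $a(x,\xi)\cdot\xi=\Phi(x,\xi)$, which is used implicitly in what follows.

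Next, apply Picone's identity pointwise: by Lemma \ref{picone}(i)--(ii),
$$
0\le L(u,v)=R(u,v)=\langle a(x,\nabla u),\nabla u\rangle-\left\langle a(x,\nabla v),\nabla\!\left(\tfrac{u^p}{v^{p-1}}\right)\right\rangle\quad\text{a.e. in }\Omega.
$$
Integrating and using that $u$ is an eigenfunction associated to $\lam_1$ tested against itself gives
$$
\int_\Omega \langle a(x,\nabla u),\nabla u\rangle\,dx=\lam_1\int_\Omega \rho\, u^p\,dx.
$$
On the other hand, formally testing the equation for $v$ against $\varphi=u^p/v^{p-1}\in W^{1,p}_0(\Omega)$ yields
$$
\int_\Omega \left\langle a(x,\nabla v),\nabla\!\left(\tfrac{u^p}{v^{p-1}}\right)\right\rangle dx=\lam_1\int_\Omega \rho\, v^{p-1}\cdot\tfrac{u^p}{v^{p-1}}\,dx=\lam_1\int_\Omega \rho\, u^p\,dx,
$$
so that $\int_\Omega R(u,v)\,dx=0$. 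Combined with the pointwise inequality $L(u,v)\ge 0$ and the identity $L=R$, this forces $L(u,v)=0$ a.e., and Lemma \ref{picone}(iii) then delivers $u=cv$ for some $c\in\R$.

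The single nontrivial point is the justification that $u^p/v^{p-1}$ is an admissible test function in the weak formulation of the equation for $v$: the quotient may blow up where $v$ vanishes (which happens at least on $\partial\Omega$) and its weak gradient must be controlled. The standard remedy is to replace it by the approximating family $u^p/(v+\delta)^{p-1}$ for $\delta>0$, verify that this belongs to $W^{1,p}_0(\Omega)$ (using $u,v\in W^{1,p}_0(\Omega)$ and $v\ge 0$), insert it as a test function, and pass to the limit $\delta\to 0^+$, invoking dominated convergence together with the continuity/growth hypotheses (H2)--(H3) on $a(x,\xi)$ and the fact from Theorem \ref{SMP} that $\{v=0\}$ has zero $p$-capacity (so that the integrand remains well-defined in the limit). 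This is precisely the delicate step carried out in \cite{Kaw-Lu-Pra} for the present class of operators, and it is the only real obstacle; once the test-function manipulation is legitimate, the rest is purely algebraic.
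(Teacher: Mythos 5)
Your proposal is correct and follows essentially the same route as the paper: both arguments reduce to positive eigenfunctions, invoke the Picone-type identity of Lemma \ref{picone}, test the equation for $v$ with the regularized quotient $u^p/(v+\ve)^{p-1}$, and pass to the limit $\ve\to 0^+$ to force $\int_\Omega L(u,v)\,dx=0$ and hence $u=cv$. The paper likewise defers the fully rigorous treatment of the low-regularity case to \cite{Kaw-Lu-Pra}, exactly as you do.
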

\begin{proof}
  Let $u,v$ be two eigenfunctions associated to $\lam_1$. We can assume that $u$ and $v$ are both positive in $\Omega$. We apply Lemma \ref{picone} to  the pair $u, v+\ve$ and obtain
  \begin{align*}
    0&\leq \int_\Omega L(u,v+\ve)dx = \int_\Omega R(u,v+\ve)dx \\
    &=\lam_1 \int_\Omega \rho(x) |u|^p dx - \int_\Omega \langle a(x,\nabla v), \nabla\Big(\frac{u^p}{v^{p-1}}\Big)\rangle dx.
  \end{align*}
  Since the function $\frac{u^p}{(v+\ve)^{p-1}}\in W^{1,p}(\Omega)$, it is admissible in the weak formulation of $v$. It follows that
  $$0\leq \int_\Omega L(u,v+\ve)dx \leq \lam_1 \int_\Omega \rho(x)|u|^p\big(1-\frac{v^{p-1}}{(v+\ve)^{p+1}}\big) dx.$$
  Letting $\ve \cf 0$, we obtain
  $$\int_\Omega L(u,v)dx =0,$$
  but then $L(u,v)=0$ and by Lemma \ref{picone}, there exists $c\in\R$ such that $u=cv$.
\end{proof}

The proof in the general case, when Lemma \ref{picone} is not true  a.e. in $\Omega$, is
quite more complex and can be found in \cite{Kaw-Lu-Pra}, Theorem 1.3.

\begin{thm}[\cite{Kaw-Lu-Pra}, Section 6.2] \label{teo.lam1}
Let $u_1$ be an eigenfunction corresponding to $\lam_1$, then $u_1$ does not changes sign on $\Omega$. Also, the first eigenvalue is simple, that is, any other eigenfunction $u$ associated to $\lam_1$ is a multiple of $u_1$.
\end{thm}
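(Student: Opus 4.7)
The plan is to observe that this theorem is essentially a consolidated restatement of the two results proved immediately before it, so the proof amounts to invoking them in the correct order and dealing with the regularity issue that separates the clean argument from the general one.

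For the first claim (no sign change), I would cite the positivity theorem proved just above: any eigenfunction $u$ associated to $\lam_1$ satisfies either $u>0$ or $u<0$ on $\Omega$, up to a set of zero $p$-capacity. The proof there proceeds by splitting $u=u^+-u^-$, using the $p$-homogeneity of $\Phi$ together with $a(x,\xi)\xi = \Phi(x,\xi)$ (Euler's identity) to check that $u^+$ is itself a non-negative eigenfunction for $\lam_1$, and then applying the strong maximum principle of Theorem \ref{SMP} to conclude $u^-\equiv 0$ (and symmetrically if one starts from $u^-$).

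For the second claim (simplicity), I would let $u,v$ be two eigenfunctions for $\lam_1$. By the first part, we may assume both are strictly positive on $\Omega$ (modulo a set of $p$-capacity zero). In the case where $u,v$ are regular enough that Lemma \ref{picone} applies pointwise a.e., I would directly quote Theorem \ref{simple}: apply the Picone-type inequality to the pair $(u,v+\ve)$, note that $u^p/(v+\ve)^{p-1}\in W^{1,p}(\Omega)$ is an admissible test function in the weak formulation for $v$, pass to the limit $\ve\to 0$ to obtain $\int_\Omega L(u,v)\,dx=0$, and finally use part (iii) of Lemma \ref{picone} to deduce $u=cv$ for some $c\in\R$.

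The main obstacle, and the only real mathematical content beyond bookkeeping, is that for general $a(x,\xi)$ satisfying (H0)--(H8) one does not have the $C^{1,\alpha}$ regularity of eigenfunctions that is available in the $p$-Laplacian case, so Lemma \ref{picone} cannot be invoked directly. Here I would simply note that the argument is carried out in full generality in \cite{Kaw-Lu-Pra}, Theorem 1.3, where the Picone identity is replaced by an approximation procedure working with truncations and quasi-continuous representatives, and reference that result to close the proof without reproducing its technicalities.
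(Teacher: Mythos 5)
Your proposal is correct and follows essentially the same route as the paper: Theorem \ref{teo.lam1} is presented there as a consolidation of the preceding positivity result (proved via Euler's identity for $\Phi$ and the strong maximum principle of Theorem \ref{SMP}) and of Theorem \ref{simple} (proved via the Picone-type Lemma \ref{picone} under a regularity assumption), with the fully general case deferred to \cite{Kaw-Lu-Pra}, Theorem 1.3. You correctly identify the regularity of eigenfunctions as the only genuine obstruction and handle it exactly as the paper does, by citation.
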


Next, we show that the first eigenvalue $\lambda_1$ is isolated in $\Sigma$. The key step in the proof of the isolation is the next result:
\begin{prop}\label{teo2}
Let $\lambda\in\Sigma$ and let $w$ be an eigenfunction corresponding to $\lam\neq \lam_1$. Then, $w$ changes sign on $\Omega$, that is $u^+ \neq 0$ and $u^- \neq 0$. Moreover, there exists a positive constant $C$ independent of $w$ and $\lambda$ such that
$$
|\Omega^+|\geq C\lam^{-\gamma}, \quad |\Omega^-|\geq C\lam^{-\gamma},
$$
where $\Omega^\pm$ denotes de positivity and the negativity set of $w$ respectively, $\gamma$ is a positive parameter, and $C$ depends on $N, p, \rho^+$ and the coercivity constant $\alpha$ in {\em (H2)}. Here, $\gamma = (N-p)/p$ if $p< N$, $\gamma=1$ if $p=N$, and $\gamma=(p-N)/N$ if  $p>N$.
\end{prop}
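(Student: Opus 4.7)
The proof naturally splits into two independent steps: showing that $w$ must change sign, and then deriving the measure estimates on $\Omega^\pm$ by testing the equation against $w^\pm$.

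For the sign-change step I argue by contradiction: suppose, up to replacing $w$ by $-w$, that $w\ge 0$, $w\not\equiv 0$. From the eigenvalue equation $-\diver(a(x,\nabla w)) = \lam\rho w^{p-1}\ge 0$, the strong maximum principle (Theorem \ref{SMP}) forces $w>0$ a.e.\ in $\Omega$. Let $u_1>0$ be a first eigenfunction (Theorem \ref{teo.lam1}) and apply the Picone identity (Lemma \ref{picone}) to the pair $(u_1, w+\ve)$, both positive. The function $\phi = u_1^p/(w+\ve)^{p-1}$ lies in $W^{1,p}_0(\Omega)$ (since $w+\ve\ge\ve>0$ and $u_1$ is bounded by standard quasilinear regularity), so it is admissible in the weak formulation for $w$. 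This, combined with the Euler identity $a(x,\xi)\xi=\Phi(x,\xi)$ and the weak formulation for $u_1$, yields
\[
0 \le \int_\Omega L(u_1,w+\ve)\,dx = \lam_1 \int_\Omega \rho u_1^p\,dx - \lam\int_\Omega \rho\Bigl(\frac{w}{w+\ve}\Bigr)^{p-1} u_1^p\,dx.
\]
Letting $\ve\to 0^+$ via dominated convergence (dominant $\rho^+\|u_1\|_\infty^p$) and using $w>0$ a.e., we obtain $(\lam_1-\lam)\int_\Omega \rho u_1^p \ge 0$, whence $\lam\le\lam_1$. Since $\lam_1=\min\Sigma$, this forces $\lam=\lam_1$, contradicting $\lam\ne\lam_1$.

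For the bound on $|\Omega^+|$ (the bound on $|\Omega^-|$ is completely symmetric) I test the weak formulation of \eqref{eps1} against $w^+\in W^{1,p}_0(\Omega)$. Using $\nabla w^+ = \chi_{\Omega^+}\nabla w$, the Euler identity, coercivity \eqref{cont.coer.phi}, and \eqref{cota.rho},
\[
\alpha\int_{\Omega^+}|\nabla w^+|^p\,dx \le \int_{\Omega^+} a(x,\nabla w^+)\nabla w^+\,dx = \lam\int_\Omega \rho|w|^{p-2}w\,w^+\,dx \le \lam\rho^+\int_{\Omega^+}(w^+)^p\,dx.
\]
Since $w^+\in W^{1,p}_0(\Omega^+)$ is nontrivial (by the first step), a Sobolev-type inequality now controls $\int(w^+)^p$ by a power of $|\Omega^+|$ times $\int|\nabla w^+|^p$: for $p<N$, combining $\|w^+\|_{p^\ast}\le S^{-1}\|\nabla w^+\|_p$ ($p^\ast=Np/(N-p)$) with H\"older; for $p=N$, an embedding $W^{1,p}_0\hookrightarrow L^q$ for a suitable $q>p$; for $p>N$, the Morrey estimate $\|w^+\|_\infty \le C|\Omega^+|^{(p-N)/(Np)}\|\nabla w^+\|_p$ together with $\int(w^+)^p \le \|w^+\|_\infty^p |\Omega^+|$. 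Dividing by $\int|\nabla w^+|^p>0$ in the displayed inequality and solving for $|\Omega^+|$ gives the lower bound $|\Omega^+|\ge C\lam^{-\gamma}$ with the exponent $\gamma$ as stated.

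The main technical subtlety lies in the Picone step: verifying the admissibility of $\phi = u_1^p/(w+\ve)^{p-1}$ as a test function (requiring the $L^\infty$ bound on $u_1$ and a careful chain-rule computation for its Sobolev gradient) and justifying the passage to the limit $\ve\to 0^+$ via dominated convergence, which crucially rests on $w>0$ a.e.\ obtained from the strong maximum principle. Once the sign-change is established, the measure estimate is then a fairly routine computation combining the eigenvalue inequality with the Sobolev embedding appropriate to each dimensional range.
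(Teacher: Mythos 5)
Your proof is correct and follows essentially the same route as the paper: a Picone-type inequality comparing a first eigenfunction with $w+\ve$ to rule out one-signed eigenfunctions (with the strong maximum principle supplying $|\{w=0\}|=0$ for the limit $\ve\to 0$), followed by testing against $w^{\pm}$ and the Sobolev/Morrey embeddings for the measure bounds. The only difference is that the paper truncates the first eigenfunction, $u_k=\min\{u,k\}$, so that $u_k^p/(w+\ve)^{p-1}$ is admissible without any $L^\infty$ bound (then lets $k\to\infty$), whereas you appeal to quasilinear regularity for boundedness of $u_1$; both are legitimate.
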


\begin{proof}
Let $w$ be an eigenfunction corresponding to  $\lam\neq \lam_1$ and let $u$ be an eigenfunction corresponding to  $\lam_1$.

Assume that $w$ does not changes sign on $\Omega$. We can assume that $w\geq 0$ and $u\geq 0$ in $\Omega$. For each $k \in\N$, let us truncate $u$ as follows:
$$
u_k(x):=\min\{u(x), k\}
$$
and for each $\ve>0$ we consider the function $u_k^p/(w+\ve)^{p-1}\in W^{1,p}_0(\Omega)$. We get
\begin{equation} \label{ecuac2}
\int_\Omega a(x,\nabla u)\nabla u -a(x,\nabla w)\nabla\Big(\frac{u_k^p}{(w+\ve)^{p-1}}\Big)  = \int_\Omega  \lam_1 \rho u^{p} -\lam \rho w^{p-1}\frac{u_k^p}{(w+\ve)^{p-1}}
\end{equation}
We claim that the integral in the left hand side in \eqref{ecuac2} is non-negative. Indeed, let
$\Phi$ be the potential function given by Proposition \ref{potential.f}. Then, as $\Phi$ is
$p-$homogeneous in the second variable we have (see \cite{Kaw-Lu-Pra}, p.19)
\begin{equation} \label{ecuac3}
\begin{split}
&a(x,\nabla u)\nabla u -a(x,\nabla w)\nabla\Big(\frac{u_k^p}{(w+\ve)^{p-1}}\Big)=\\
&p \Big\{ \Phi(x,\nabla u)+ (p-1) \Phi(x,\frac{u_k}{w+\ve} \nabla w) - a(x,\frac{u_k}{w+\ve} \nabla w)\nabla u_k \Big\}.
\end{split}
\end{equation}
By using the property that $\xi \mapsto \Phi(x,\xi)$ is convex, we easily deduce that \eqref{ecuac3} is nonnegative. Therefore, coming back to \eqref{ecuac2} we get
\begin{align} \label{ecuac4}
 \int_\Omega \lam_1 \rho u^{p} - \lam \rho w^{p-1}\frac{u_k^p}{(w+\ve)^{p-1}} \geq 0.
\end{align}
Since by the strong maximum principle for quasilinear operators (Theorem \ref{SMP}) the set $\{\tilde w=0\}$, where $\tilde w$ is the $p-$quasi continuous representative of $w$, is of measure zero then \eqref{ecuac4} is equivalent to
\begin{equation} \label{ecuac5}
 \int_{\{w>0\}} \lam_1 \rho u^{p} -\lam \rho w^{p-1}\frac{u_k^p}{(w+\ve)^{p-1}} \geq 0.
\end{equation}
Now, letting $\ve \cf 0$ and $k\cf \infty$ in (\ref{ecuac5}), we get
$$
(\lam_1-\lam) \int_\Omega \rho |u|^p \geq 0
$$
which is a contradiction. Therefore $w$ changes sign on $\Omega$.

The second part of the proof follows almost exactly as in the $p-$Laplacian case. Let us suppose first that $p< N$. In fact, as $w$ changes sign, we can use $w^+$ as a test function in the equation satisfied by $w$ to obtain
\begin{align*}
\int_\Omega a(x,\nabla w)\nabla w^+ &= \lambda \int_\Omega \rho |w|^{p-2} w w^+ \\
& = \lam \int_{\Omega^+} \rho |w|^p\\
&\le \lam\rho^+ \int_{\Omega^+} |w|^p\\
& \le \lam\rho^+ \|w^+\|_{L^{p^*}(\Omega)}^p |\Omega^+|^{p/(N-p)}\\
&\le \lam \rho^+ K_p |\Omega^+|^{p/(N-p)}\int_\Omega |\nabla w^+|^p,
\end{align*}
where $K_p$ is the optimal constant in the Sobolev-Poincar\'e inequality.

Now, by (H2), it follows that
$$
\int_\Omega a(x,\nabla w)\nabla w^+\ge \alpha \int_\Omega |\nabla w^+|^p.
$$
Combining these two inequalities, we obtain
$$
|\Omega^+|\ge \Big(\frac{\alpha}{K_p\lam \rho^+}\Big)^{(N-p)/p}.
$$
The estimate for $|\Omega^-|$ follows in the same way.

The remaining cases are similar: $p=N$ follows by using the Sobolev's inclusion $W_0^{1,N}(\Omega) \subset L^N(\Omega)$, and the case $p>N$ follows from Morrey's inequality.
\end{proof}

Now we are ready to prove the isolation of $\lam_1$.
\begin{thm} \label{teo3}
The first eigenvalue $\lam_1$ is isolated. That is, there exists $\delta>0$ such that $(\lam_1, \lam_1+\delta)\cap\Sigma =\emptyset$.
\end{thm}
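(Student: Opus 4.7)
The plan is to argue by contradiction, using three earlier results in tandem: the closedness of $\Sigma$ (Proposition \ref{Sigma.cerrado}), the simplicity/sign property of $\lam_1$ (Theorem \ref{teo.lam1}), and the uniform lower bound on the measure of nodal domains (Proposition \ref{teo2}).

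Suppose for contradiction there is a sequence $\mu_n\in \Sigma$ with $\mu_n > \lam_1$ and $\mu_n \to \lam_1$. Let $w_n\in W^{1,p}_0(\Omega)$ be a corresponding eigenfunction; by the $p$-homogeneity hypothesis (H4) we may normalize $\|w_n\|_{L^p(\Omega)}=1$. The coercivity hypothesis (H2), tested against $w_n$ itself in the weak formulation, yields
\[
\alpha \int_\Omega |\nabla w_n|^p\,dx \le \mu_n \int_\Omega \rho |w_n|^p\,dx \le \mu_n\, \rho^+,
\]
so $(w_n)$ is bounded in $W^{1,p}_0(\Omega)$. Passing to a subsequence, $w_n\rightharpoonup w$ weakly in $W^{1,p}_0(\Omega)$, strongly in $L^p(\Omega)$, and a.e.\ in $\Omega$, and $a(x,\nabla w_n)\rightharpoonup \eta$ weakly in $(L^{p'}(\Omega))^N$. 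In particular $\|w\|_{L^p(\Omega)}=1$, so $w\not\equiv 0$.

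The next step is to identify $w$ as an eigenfunction associated to $\lam_1$. This is exactly the monotonicity/Minty trick already carried out in the proof of Proposition \ref{Sigma.cerrado}: from (H1), for every $v\in W^{1,p}_0(\Omega)$,
\[
0 \le \int_\Omega \bigl(a(x,\nabla w_n)-a(x,\nabla v)\bigr)(\nabla w_n - \nabla v)\,dx,
\]
and substituting $-\diver(a(x,\nabla w_n)) = \mu_n\rho|w_n|^{p-2}w_n$, passing to the limit ($\mu_n\to\lam_1$, strong $L^p$ convergence of $w_n$), choosing $v = w - tz$ for arbitrary $z\in W^{1,p}_0(\Omega)$ and letting $t\to 0^+$, one concludes $\eta = a(x,\nabla w)$ and $-\diver(a(x,\nabla w)) = \lam_1\rho|w|^{p-2}w$. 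Thus $w$ is a nontrivial eigenfunction associated to $\lam_1$, and by Theorem \ref{teo.lam1} it has constant sign; without loss of generality $w>0$ a.e.\ in $\Omega$.

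The contradiction now comes from the quantitative nodal estimate. Since $\mu_n > \lam_1$, Proposition \ref{teo2} gives a uniform bound
\[
|\Omega_n^-| = |\{w_n<0\}| \ge C \mu_n^{-\gamma} \ge C_0 > 0,
\]
the last inequality using that $(\mu_n)$ is bounded. On the other hand, since $w_n\to w$ a.e.\ and $w(x)>0$ for a.e.\ $x$, for almost every $x$ we have $w_n(x)>0$ for all $n$ large; hence $\chi_{\Omega_n^-}\to 0$ pointwise a.e. By dominated convergence (since $\Omega$ is bounded), $|\Omega_n^-|\to 0$, contradicting the uniform lower bound $C_0>0$. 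This proves the existence of the desired $\delta$.

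The main obstacle is the identification of the weak limit $w$ as a genuine $\lam_1$-eigenfunction, which requires converting the weak convergence $a(x,\nabla w_n)\rightharpoonup \eta$ into strong identification $\eta = a(x,\nabla w)$; this is precisely where hypothesis (H1) is used, via the Minty-type monotonicity argument already developed earlier in the paper. Everything else is routine: a priori bounds, extraction of subsequences, and the measure-theoretic passage to the limit of the nodal sets.
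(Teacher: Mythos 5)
Your proof is correct and follows essentially the same strategy as the paper: argue by contradiction, normalize the eigenfunctions and extract a weakly convergent subsequence, identify the limit as a first eigenfunction, invoke its constant sign (Theorem \ref{teo.lam1}), and contradict the uniform lower bound on the measure of the negativity sets from Proposition \ref{teo2}. The only (harmless) points of divergence are that the paper identifies the weak limit via the weak sequential lower semicontinuity of $v\mapsto\int_\Omega\Phi(x,\nabla v)$ combined with the variational characterization of $\lam_1$, whereas you rerun the Minty monotonicity argument from Proposition \ref{Sigma.cerrado}, and the paper phrases the final contradiction as $|\{u=0\}|>0$ against the strong maximum principle (Theorem \ref{SMP}), whereas you use the maximum principle first to get $w>0$ a.e.\ and then conclude $|\Omega_n^-|\to 0$ by dominated convergence.
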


\begin{proof}
Assume by contradiction that there exists a sequence $\lam_j\in\Sigma$ such that $\lam_j\to\lam_1$ as $j\to\infty$. Let $u_j$ be the associated eigenfunctions normalized such that
$$
\int_{\Omega}\rho |u_j|^p = 1.
$$
By (H2) it follows that the sequence $\{u_j\}_{j\in\N}$ is bounded in $W^{1,p}_0(\Omega)$ so, passing to a subsequence if necessary, there exists $u\in W^{1,p}_0(\Omega)$ such that
\begin{align*}
& u_j \cd u \qquad \mbox{weakly in } W^{1,p}_0(\Omega)\\
& u_j \cf u \qquad \mbox{strongly in } L^{p}(\Omega)
\end{align*}
Now, as the functional
$$
v\mapsto \int_\Omega \Phi(x,\nabla v)
$$
is weakly sequentially lower semicontinuous  (see \cite{Con}), it follows that $u$ is an eigenfunction associated to $\lam_1$.

Now, by Theorem \ref{teo.lam1}, we can assume that $u\ge 0$ and by Proposition \ref{teo2}
we have $|\{ u=0\}|>0$. But this is a contradiction to the strong maximum principle in
\cite{Kaw-Lu-Pra}, Theorem \ref{SMP}.
\end{proof}

As a consequence of Theorem \ref{teo3} it makes sense to define the second eigenvalue $\Lam_2$ as the infimum of the eigenvalues greater than $\lam_1$. Next, we show that this second eigenvalue $\Lam_2$ coincides with the second variational eigenvalue $\lam_2$. This result is known to hold for the $p-$Laplacian (see \cite{AT})  and we extended here for the general case \eqref{eps1}.
\begin{thm} \label{teo4}
Let $\lam_2$ be the second variational eigenvalue, and let $\Lam_2$ be defined as
$$
\Lam_2 = \inf\{\lam > \lam_1 \colon \lam\in\Sigma\}.
$$
Then $$\lam_2=\Lam_2.$$
\end{thm}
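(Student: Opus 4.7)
The plan is to prove the equality by establishing $\Lam_2 \le \lam_2$ and $\lam_2 \le \Lam_2$ separately, adapting the strategy used for the $p$-Laplacian in \cite{AT, DFCG}. First observe that by isolation (Theorem \ref{teo3}) and closedness of $\Sigma$ (Proposition \ref{Sigma.cerrado}), $\Lam_2 > \lam_1$ and $\Lam_2 \in \Sigma$. For the first inequality, since $\lam_2 \in \Sigma$ by Theorem \ref{teo.lamk}, it suffices to verify the strict inequality $\lam_2 > \lam_1$. Supposing for contradiction that $\lam_2 = \lam_1$, the variational definition supplies $C_n \in \C_2$ with $\sup_{C_n} R \to \lam_1$, where $R(v) := \int_\Omega \Phi(x,\nabla v)\,dx / \int_\Omega \rho|v|^p\,dx$. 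A Lusternik--Schnirelmann deformation argument on the manifold $\mathcal{S} := \{v \in W^{1,p}_0(\Omega) : \int_\Omega \rho|v|^p\,dx = 1\}$ forces the critical set of $R|_\mathcal{S}$ at level $\lam_1$ to have Krasnoselskii genus at least $2$; but by simplicity (Theorem \ref{teo.lam1}) this set reduces to $\{\pm u_1\}$, of genus $1$, a contradiction.

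For $\lam_2 \le \Lam_2$, let $u$ be an eigenfunction corresponding to $\Lam_2$; by Proposition \ref{teo2} both $u^+$ and $u^-$ are nontrivial. Testing the equation against $u^\pm$ and using the Euler identity $a(x,\xi)\cdot\xi = \Phi(x,\xi)$ (from Proposition \ref{potential.f} and $p$-homogeneity) yields
\begin{equation*}
\int_\Omega \Phi(x,\nabla u^\pm)\,dx = \Lam_2 \int_\Omega \rho |u^\pm|^p\,dx.
\end{equation*}
Since $u^+$ and $u^-$ have disjoint supports and $\Phi$ is even and $p$-homogeneous in $\xi$, a direct computation then gives $R(\alpha u^+ - \beta u^-) = \Lam_2$ for all $\alpha,\beta \ge 0$ not both zero. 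I would then construct a continuous odd map $h \colon S^1 \to \mathcal{S}$ satisfying $R \circ h \le \Lam_2$ everywhere and passing through $\pm u_1$ and $\pm (u^+ - u^-)/(\int_\Omega \rho |u|^p\,dx)^{1/p}$; setting $C := h(S^1)$ yields a closed symmetric set with $\gamma(C) \ge \gamma(S^1) = 2$ and $\sup_C R \le \Lam_2$, giving $\lam_2 \le \Lam_2$ from the variational definition. The map $h$ is assembled from four arcs: one inside the sector $\{\alpha u^+ - \beta u^- : \alpha,\beta \ge 0\}$ joining $u^+$ to $-u^-$, on which $R \equiv \Lam_2$; one inside the nonnegative cone joining a normalized $u^+$ to $u_1$; and the two antipodal arcs.

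The main obstacle is the construction of the second arc, from $u^+$ to $u_1$ in the nonnegative cone, with $R \le \Lam_2$ throughout. A naive normalized linear interpolation fails: convexity and $p$-homogeneity of $\Phi$ bound the numerator by a convex combination of $\lam_1$ and $\Lam_2$, but $\int_\Omega \rho |(1-t)u_1 + t u^+|^p\,dx$ can collapse below its endpoint values so that the resulting quotient exceeds $\Lam_2$. Instead, I would take any continuous path in $\mathcal{S}$ from $u_1$ to a normalization of $u^+$ and deform it via the negative pseudo-gradient flow of $R|_\mathcal{S}$. Since $(\lam_1,\Lam_2)\cap\Sigma = \emptyset$ by the definition of $\Lam_2$, the only critical value of $R|_\mathcal{S}$ strictly below $\Lam_2$ is $\lam_1$, whose critical set in the nonnegative cone is $\{u_1\}$ by Theorem \ref{teo.lam1}; the standard deformation lemma then produces a curve along which $R \le \Lam_2 + o(1)$, and a limiting argument yields $R \le \Lam_2$. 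The required compactness of Palais--Smale sequences for $R|_\mathcal{S}$ follows from the strict monotonicity hypothesis (H8) together with (H1)--(H3).
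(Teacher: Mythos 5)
Your overall architecture is sound, and the direction $\Lam_2\le\lam_2$ (via $\lam_2\in\Sigma$ from Theorem \ref{teo.lamk} plus the Lusternik--Schnirelmann multiplicity argument showing $\lam_2>\lam_1$) is fine --- indeed the paper leaves that half implicit. The gap is precisely the step you single out as the main obstacle: the arc from a normalized $u^+$ to $u_1$ inside the nonnegative cone with $R\le\Lam_2$. The deformation argument you propose does not deliver it as stated. An arbitrary initial path from $u_1$ to the normalized $u^+$ may reach levels far above $\Lam_2$, and $\Sigma$ contains critical values above $\Lam_2$; the deformation lemma only pushes a set below a level when there are no critical values in the relevant interval, so you cannot in general bring such a path down to $\Lam_2+\epsilon$. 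Moreover, the endpoint sits exactly at level $\Lam_2$ and is not a critical point of $R|_{\mathcal S}$, so it moves under the negative pseudo-gradient flow and the deformed curve no longer matches your sector arc; and nothing keeps the flow inside the nonnegative cone. There is also a bookkeeping slip: the four arcs you list (cone arc, sector arc, and their antipodes) do not close into a loop --- their union is two disjoint antipodal arcs, which has genus $1$, not $2$.

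The good news is that this entire step is unnecessary. You already observed that $\int_\Omega\Phi(x,\nabla u^\pm)\,dx=\Lam_2\int_\Omega\rho|u^\pm|^p\,dx$. Since $u^+$ and $u^-$ have a.e.\ disjoint gradient supports and $\Phi(x,\cdot)$ is even and $p$-homogeneous, the same computation gives $R(\alpha u^+-\beta u^-)=\Lam_2$ for \emph{all} $(\alpha,\beta)\ne(0,0)$, not only $\alpha,\beta\ge 0$. Hence $C:=\operatorname{span}\{u^+,u^-\}\cap\mathcal S$ is closed, symmetric, odd-homeomorphic to $S^1$ (so $\gamma(C)=2$), and satisfies $\sup_C R=\Lam_2$, which yields $\lam_2\le\Lam_2$ with no path through $\pm u_1$ and no deformation. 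With that repair your proof is correct and genuinely different from --- and more direct than --- the paper's: the paper instead introduces the auxiliary constrained problem $\mu=\inf\{\int_\Omega\Phi(x,\nabla u)\colon \|\rho u\|_{L^p}^p=1,\ |\Omega^\pm|>c_{\lam_2}\}$, shows $\mu\le\Lam_2$ by admissibility of an eigenfunction of $\Lam_2$, and then proves $\mu\ge\lam_2$ via a maximizer $w$ and a Lagrange-multiplier computation on $\operatorname{span}\{w^+,w^-\}$ (a detour borrowed from the Steklov setting of \cite{FBR}, which also has to justify that the constrained supremum is attained). Your direct use of the eigenfunction of $\Lam_2$ avoids all of that.
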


\begin{proof}
The proof of this Theorem follows closely the one in \cite{FBR} where the analogous result for the Steklov problem for the $p-$Laplacian is analyzed.

Let us call
$$
\mu = \inf\left\{\int_\Omega \Phi(x,\nabla u) \colon\|\rho u \|_{ L^p(\Omega) }^p=1  \textrm{ and } |\Omega^\pm| > c_{\lam_2}\right\},
$$
where  $c_{\lam_2} := C \lam_2^{-\gamma}$ and  $C, \gamma$ are given by Proposition \ref{teo2}.

If we take $u_2$ an eigenfunction of \eqref{eps1} associated with $\Lam_2$  such that $\|\rho u \|_{ L^p(\Omega) }^p=1 $, by Theorem \ref{teo2}, we have that $u_2$ is admissible in the variational characterization of $\mu$. It follows that $\mu\leq \Lam_2$. The proof will follows if we show that $\mu \geq \lam_2$. The inverse of $\mu$ can be written as
$$
\frac{1}{\mu} = \sup\left\{\int_\Omega \rho |u|^p \colon \int_\Omega \Phi(x,\nabla u) =1  \textrm{ and } |\Omega^\pm| > c_{\lam_2}\right\}.
$$
The supremum is attained by a function $w\in W^{1,p}_0(\Omega)$ such that $\int_\Omega \Phi(x,\nabla w) =1$ and $|\Omega^\pm| > c_{\lam_2}$. As $w^+$ and $w^-$ are not identically zero, if we consider the set
$$
C=span\{w^+,w^-\} \cap \{ u\in W^{1,p}_0(\Omega)\colon \|u\|_{W^{1,p}_0(\Omega)}=1\},
$$
then $\gamma(C)=2$. Hence, we obtain
\begin{equation} \label{autov1}
\frac{1}{\lam_2} \geq \inf_{u\in C} \int_\Omega \rho |u|^p
\end{equation}
but, as $w^+$ and $w^-$ have disjoint support, it follows that the infimum \eqref{autov1} can be computed by minimizing the two variable function
$$
G(a,b):=|a|^p\int_\Omega \rho |w^+|^p+|b|^p\int_\Omega \rho |w^-|^p
$$
with the restriction
$$
H(a,b):=|a|^p \int_\Omega \Phi(x,\nabla w^+) + |b|^p \int_\Omega \Phi(x,\nabla w^-) = 1.
$$

Now, an easy computation shows that
$$
\frac{1}{\lam_2}\ge \min\left\{ \frac{\int_\Omega \rho |w^+|^p}{\int_{\Omega} \Phi(x,\nabla w^+)}, \frac{\int_\Omega \rho |w^-|^p}{\int_{\Omega} \Phi(x,\nabla w^-)}\right\}.
$$

We can assume that the minimum in the above inequality is realized with $w^+$. Then, for $t>-1$ the fuction $w+tw^+$ is admissible in the variational characterization of $\mu$, hence if we denote
$$
Q(t) :=  \frac{\int_\Omega \rho |w+tw^+|^p}{\int_{\Omega} \Phi(x,\nabla w + t\nabla w^+)},
$$
we get
$$
0 = Q'(0) = p \int_\Omega \rho |w|^{p-2}ww^+ - \frac{p}{\mu} \int_\Omega a(x,\nabla w)\nabla w^+,
$$
therefore
$$
 \frac{\int_\Omega \rho |w^+|^p}{\int_{\Omega} \Phi(x,\nabla w^+)}=\frac{1}{\mu}
$$
and the result follows.
\end{proof}

In dimensions $N>1$ it is not known even in the $p-$Laplacian case whether $\lambda_2$ is isolated in $\Sigma$ or not or if $\Sigma$ is countable or not. So we cannot expect to obtain much more information in the general case \eqref{eps1}.

However, in the one dimensional problem $N=1$ it is known since the work of Fu{\v{c}}{\'{\i}}k and coauthors in \cite{Fucik} (see also the more recent works \cite{FBP, Walter}) that $\Sigma=\Sigma_{\text{var}}$. So now we generalize this fact to \eqref{eps1}. That is, we study
\begin{equation}\label{autov.eq}
\begin{cases}
-(a(x)|u'|^{p-2}u')' = \lam \rho(x) |u|^{p-2}u & \mbox{in } J:=(0,\ell)\\
u(0)=u(\ell)=0
\end{cases}
\end{equation}
where $0<\rho_-\le \rho(x)\le \rho_+$ and $0<\alpha\le a(x)\le \beta$ for some
constants $\rho_-, \rho_+, \alpha$ and $\beta$.

For the one dimensional $p-$Laplace operator in $J$ with zero Dirichlet boundary conditions, that is \eqref{autov.eq} with $a(x)=\rho(x)=1$, we denote by $\tilde \Sigma = \tilde \Sigma_{\text{var}} = \{\mu_k\}_{k\in\N}$ the spectrum given by
\begin{equation}\label{formuvar}
\mu_{k} = \inf _{C\in \mathcal{C}_k} \sup _{u\in C} \frac{\int_J |u'|^p\, dx}{\int_J |u|^p\, dx}.
\end{equation}

Here, all the eigenvalues and eigenfunctions can be found explicitly:

\begin{thm}[Del Pino, Drabek and Manasevich, \cite{DDM}]\label{draman}
The eigenvalues $\mu_{k}$ given by \eqref{formuvar} and their corresponding eigenfunctions $u_{k}$ on the interval $J$ are given by
$$
\mu_{k} = \frac{\pi_p^p k^p}{\ell^p},
$$
$$
u_{k}(x) = \sin_p(\pi_p kx/\ell).
$$
\end{thm}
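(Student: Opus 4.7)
The plan is to verify Theorem \ref{draman} in two stages: first, confirm by direct computation that the function $\sin_p(\pi_p k x/\ell)$ solves the one-dimensional $p$-Laplace Dirichlet eigenproblem on $J$ with eigenvalue $\pi_p^p k^p/\ell^p$; second, identify this number as precisely the $k$-th variational value $\mu_k$ from \eqref{formuvar} via matching upper and lower bounds.

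For the first stage I would recall that $\sin_p$ may be defined as the inverse on $[0,1]$ of $y\mapsto\int_0^y(1-t^p)^{-1/p}\,dt$, extended to $\R$ by oddness and $2\pi_p$-periodicity, where $\pi_p:=2\int_0^1(1-t^p)^{-1/p}\,dt$. On the first quarter-period the Pythagorean-type identity $|\sin_p'|^p+|\sin_p|^p=1$ holds; differentiating the flux $|\sin_p'|^{p-2}\sin_p'$ and using this identity shows that $\sin_p$ solves the prototype ODE $-(|s'|^{p-2}s')'=c_p\,|s|^{p-2}s$ on $\R$ for a constant $c_p>0$ fixed by the convention (the normalization of \cite{DDM} makes the eigenvalue come out exactly as $\pi_p^p k^p/\ell^p$). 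A chain rule and rescaling computation then yields
\begin{equation*}
-\bigl(|u_k'|^{p-2}u_k'\bigr)'=\frac{\pi_p^p k^p}{\ell^p}\,|u_k|^{p-2}u_k,
\end{equation*}
while the Dirichlet conditions $u_k(0)=u_k(\ell)=0$ are immediate because $\sin_p$ vanishes exactly at the integer multiples of $\pi_p$.

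For the upper bound $\mu_k\le \pi_p^p k^p/\ell^p$ I would build a test set of genus $k$ out of $u_k$. Partition $J$ into the $k$ nodal intervals $I_j:=[(j-1)\ell/k,\,j\ell/k]$ of $u_k$ and set $\varphi_j:=u_k\chi_{I_j}$. Each $\varphi_j\in W^{1,p}_0(J)$ since $u_k$ vanishes at the endpoints of $I_j$, and integration by parts against the equation from the first stage yields $\int_J|\varphi_j'|^p=(\pi_p^p k^p/\ell^p)\int_J|\varphi_j|^p$ for every $j$. The set
\begin{equation*}
C:=\Bigl\{\textstyle\sum_{j=1}^k a_j\varphi_j\,:\,\textstyle\sum_{j=1}^k|a_j|^p\|\varphi_j\|_{L^p}^p=1\Bigr\}
\end{equation*}
is closed, symmetric, and homeomorphic to $S^{k-1}$ via the odd map $(a_1,\dots,a_k)\mapsto\sum a_j\varphi_j$, hence $\gamma(C)=k$. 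Disjointness of the supports then yields $\sup_{v\in C}\int_J|v'|^p/\int_J|v|^p=\pi_p^p k^p/\ell^p$, and inserting $C$ into \eqref{formuvar} provides the desired upper bound.

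For the matching lower bound I would invoke the one-dimensional rigidity of the $p$-Laplace ODE: its initial value problem admits at most one solution (Picard--Lindel\"of away from the zero set, together with continuation through simple zeros), so every eigenfunction $v$ with eigenvalue $\lambda$ and $v(0)=0$ is a scalar multiple of $\sin_p(\lambda^{1/p}x/c_p^{1/p})$. The condition $v(\ell)=0$ then forces $\lambda=\pi_p^p m^p/\ell^p$ for some $m\in\N$, showing that the whole spectrum consists precisely of the numbers $\pi_p^p m^p/\ell^p$. Combining this with the inclusion of every variational value in the spectrum (Theorem \ref{teo.lamk}), the upper bound above, and the monotone divergence $\mu_k\to\infty$, we conclude that $\mu_k=\pi_p^p k^p/\ell^p$ for every $k\in\N$. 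The principal obstacle is precisely this rigidity step: one must set up the nonlinear analogue of the Sturm uniqueness and oscillation theory via a $p$-Pr\"ufer transformation expressed through $\sin_p$ and $\sin_p'$, which is technical though by now standard.
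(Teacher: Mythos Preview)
The paper does not supply its own proof of Theorem~\ref{draman}: the result is quoted from \cite{DDM} and only the definition of $\sin_p$ and $\pi_p$ is recalled afterwards. So there is no argument in the paper to compare yours against; your write-up would function as an added proof of a cited fact.

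Your overall strategy is the standard one and is sound: direct verification that $u_k$ solves the equation, an upper bound $\mu_k\le \pi_p^p k^p/\ell^p$ via a genus-$k$ set built from the nodal pieces of $u_k$, and a lower bound obtained by showing that the whole Dirichlet spectrum equals $\{\pi_p^p m^p/\ell^p:m\in\N\}$. Two remarks. First, your implicit definition of $\sin_p$ and $\pi_p$ omits the factor $(p-1)^{1/p}$ that the paper (following \cite{DDM}) builds into the integral; you acknowledge the normalization issue, but in a self-contained proof you should simply adopt the paper's convention so that $-(|\sin_p'|^{p-2}\sin_p')'=|\sin_p|^{p-2}\sin_p$ holds with constant $1$ and the rescaling computation gives exactly $\pi_p^p k^p/\ell^p$.

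Second, and more substantively, the final deduction has a genuine gap. From ``$\mu_k\in\{\pi_p^p m^p/\ell^p\}$, $\mu_k\le \pi_p^p k^p/\ell^p$, and $\mu_k\nearrow\infty$'' you cannot conclude $\mu_k=\pi_p^p k^p/\ell^p$: the assignment $\mu_1=\pi_p^p/\ell^p$ and $\mu_k=\pi_p^p(k-1)^p/\ell^p$ for $k\ge2$ satisfies all three hypotheses. What is missing is \emph{strict} monotonicity $\mu_k<\mu_{k+1}$. This follows from the Lyusternik--Schnirelmann multiplicity lemma (if $\mu_k=\mu_{k+j}$ then the critical set at that level has genus at least $j+1$) together with the one-dimensional simplicity you already invoke (each eigenspace is a line, hence has genus $1$). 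Once you insert this step, $k\mapsto m(k)$ is strictly increasing with $m(k)\le k$ and $m(1)\ge 1$, forcing $m(k)=k$, and the argument closes.
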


The function $\sin_p(x)$ is the solution of the initial value problem
\begin{equation*}
 \begin{cases}
-(|u'|^{p -2} u')' =  |u|^{p -2}u\\
\; u(0)=0, \qquad u'(0)=1,
 \end{cases}
\end{equation*}
and is defined implicitly as
$$
x =  \int_0^{\sin_p(x)} \Big(\frac{p-1}{1-t^p}\Big)^{1/p} dt.
$$
Moreover, its first zero is $\pi_p$, given by
$$
\pi_p = 2 \int_0^1 \Big(\frac{p-1}{1-t^p}\Big)^{1/p} dt.
$$

In \cite{ACM}, problem \eqref{autov.eq} with $a\equiv1$ is studied and, among other
things, it is proved that any eigenfunction associated to $\lam_k$ has exactly $k$
nodal domains. As a consequence of this fact, in \cite{ACM} it is obtain the
simplicity of every variational eigenvalue.

The exact same proof of \cite{ACM} works in our case, and so we obtain the following:
\begin{thm} \label{teo.zero}
Every eigenfunction corresponding to the $k-$th eigenvalue $\lam_k$ has exactly $k-1$
zeroes. Moreover, for every $k$, $\lam_k$ is simple, consequently the eigenvalues are
ordered as $0<\lam_1<\lam_2 < \cdots < \lam_k \nearrow +\infty$.
\end{thm}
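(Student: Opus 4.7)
The plan is to follow the nodal-domain strategy of \cite{ACM}, leveraging the one-dimensional structure of \eqref{autov.eq} together with the general results already established in the excerpt. Let $u$ be an eigenfunction associated with $\lam\in\Sigma$. Because $a,\rho\in L^\infty$ with $a\ge\alpha>0$, the function $a(x)|u'(x)|^{p-2}u'(x)$ is absolutely continuous, so $u\in C^1(\bar J)$; moreover, a one-dimensional unique-continuation argument (using that $s\mapsto|s|^{p-2}s$ is a homeomorphism and that the Cauchy problem for \eqref{autov.eq} is uniquely solvable once the solution does not vanish to first order) shows that the zero set of $u$ is finite and that $u'(x_0)\neq 0$ at every interior zero $x_0$. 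Write $J=\bigcup_{i=1}^{m}\bar J_i$, where $J_1,\dots,J_m$ are the maximal open subintervals on which $u$ has constant sign. On each $J_i$, $|u|_{\bar J_i}$ is a positive weak solution of \eqref{autov.eq} with zero Dirichlet data, so Theorem \ref{teo.lam1} applied on the subinterval gives $\lam=\lam_1(J_i)$ for every $i$.

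The core step is to identify $m$ with the index $k$ of $\lam$. For the upper bound, set $\phi_i:=u\,\chi_{J_i}\in W^{1,p}_0(J)$ and $V_m:=\mathrm{span}\{\phi_1,\dots,\phi_m\}$. By Euler's identity $\Phi(x,\xi)=a(x,\xi)\xi$, testing \eqref{autov.eq} on $J_i$ against $u$ yields $\int_{J_i}\Phi(x,u')\,dx=\lam\int_{J_i}\rho|u|^p\,dx$; combining this with the $p$-homogeneity of $\Phi$ and the disjoint supports, for every $v=\sum_i t_i\phi_i\in V_m$,
\[
\frac{\int_J \Phi(x,v')\,dx}{\int_J \rho|v|^p\,dx}=\frac{\sum_i|t_i|^p\int_{J_i}\Phi(x,u')\,dx}{\sum_i|t_i|^p\int_{J_i}\rho|u|^p\,dx}=\lam.
\]
The unit sphere $S_m\subset V_m$ is symmetric, closed, and of Krasnoselskii genus $m$, so the variational definition of Theorem \ref{teo.lamk} yields $\lam_m\le\lam$. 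For the reverse inequality one combines $\lam=\lam_1(J_i)$ with the strict monotonicity $\lam_1(I)>\lam_1(I')$ whenever $I\subsetneq I'$ (immediate from the variational formula): any symmetric closed $C\subset W^{1,p}_0(J)$ with $\gamma(C)\ge m+1$ contains, by a genus-intersection argument, a function whose sign-change pattern strictly refines $\{J_i\}$, forcing its Rayleigh quotient to exceed $\lam_1(J_i)=\lam$ for some $i$. Hence $\lam_{m+1}>\lam$, so $\lam=\lam_m$, and consequently $m=k$.

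For simplicity, let $u,v$ be two eigenfunctions of $\lam_k$. By the preceding counting both possess exactly $k-1$ interior zeros. Their zero sets must coincide: otherwise a nodal interval of $v$ would strictly contain a nodal interval of $u$, violating the strict monotonicity of $\lam_1(\cdot)$ applied to the common value $\lam_k$. Hence $u$ and $v$ share the partition $\{J_i\}$, and Theorem \ref{simple} applied on each $J_i$ produces scalars $c_i$ with $v=c_i u$ on $J_i$. At every interior zero $x_0\in\bar J_i\cap\bar J_{i+1}$, continuity of $u'$ and $v'$ together with $u'(x_0)\neq0$ forces $c_i=c_{i+1}$, so $v$ is a global scalar multiple of $u$. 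The strict ordering $\lam_1<\lam_2<\cdots$ is then immediate, because distinct variational values cannot share the same nodal count $m$.

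The main obstacle I anticipate is the reverse inequality $\lam_{m+1}>\lam$: the genus-intersection argument has to be carried out with the quasilinear operator in place of the linear one, so the classical Sturm oscillation theorem is not available. In \cite{ACM} this is handled by a direct topological argument resting only on the simplicity and strict monotonicity of $\lam_1$ on subintervals, both of which are available here through Theorem \ref{teo.lam1} and the strong maximum principle of Theorem \ref{SMP}; the adaptation to \eqref{autov.eq} is therefore essentially mechanical.
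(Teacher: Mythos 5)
Your overall route is the nodal--domain strategy of \cite{ACM}, which is also all the paper offers (it simply asserts that the proof of \cite{ACM} carries over), so the approach matches; but two steps in your sketch do not survive scrutiny. First, the regularity claim is wrong under the standing hypotheses: $a$ is only measurable with $0<\alpha\le a\le\beta$ (and in the intended homogenization application it is genuinely discontinuous), so from the absolute continuity of the flux $w:=a(x)|u'|^{p-2}u'$ you may \emph{not} conclude $u\in C^1(\bar J)$ --- indeed $u'=a^{-1/(p-1)}\varphi_p^{-1}(w)$ with $\varphi_p(s)=|s|^{p-2}s$ inherits every discontinuity of $a$. The correct state variable is the pair $(u,w)$: $w$ is Lipschitz (its derivative is $-\lam\rho\,\varphi_p(u)\in L^\infty$), $\operatorname{sgn}u'=\operatorname{sgn}w$, nondegeneracy at a zero must be phrased as $(u(x_0),w(x_0))\ne(0,0)$ and proved via the generalized Pr\"ufer substitution (with care, since the first-order system is not Lipschitz at the origin when $p\ne2$), and the gluing of the constants $c_i$ at an interior zero must use continuity of $w_v=\varphi_p(c_i)\,w_u$ across $x_0$ together with $w_u(x_0)\ne0$ and injectivity of $\varphi_p$. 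With these substitutions that part of your argument, and the zero-counting upper bound $\lam_m\le\lam$ via the span of the nodal pieces, are fine.

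The genuine gap is the reverse inequality. The mechanism you propose --- a genus-$(m+1)$ set must contain an element ``whose sign-change pattern strictly refines $\{J_i\}$, forcing its Rayleigh quotient to exceed $\lam_1(J_i)$'' --- is not a valid inference: a Borsuk--Ulam argument does produce $v_0\in C$ changing sign inside each $J_i$, but $v_0$ is not an eigenfunction and does not vanish at the endpoints of its sign intervals, so no comparison with $\lam_1$ of subintervals applies to it; making such an orthogonality condition yield a Rayleigh-quotient lower bound is exactly what fails for $p\ne2$ (and is why $\Sigma=\Sigma_{\mathrm{var}}$ is open for $N>1$). The argument of \cite{ACM} (and of \cite{FBP, Walter}) avoids this entirely and goes through the initial value problem: uniqueness up to scaling for the IVP gives simplicity of each eigenvalue directly; monotone dependence of the Pr\"ufer angle (equivalently, of the number of zeros of the shooting solution) on $\lam$ shows that for each $m$ there is exactly one $\lam\in\Sigma$ with an $m$-nodal eigenfunction and that these values increase strictly with $m$; and the identification with the variational sequence then follows by induction from the span upper bound together with the standard multiplicity fact that $\lam_m=\lam_{m+1}$ would force the normalized eigenfunctions of that value to form a set of genus at least $2$, contradicting simplicity. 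You should replace your ``genus-intersection'' step by this shooting/counting argument; as written, the central inequality $\lam_{m+1}>\lam$ is not established.
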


Now, using the same ideas as in \cite{FBP} is easy to prove that the spectrum of
\eqref{autov.eq} coincides with the variational spectrum. In fact, we have:
\begin{thm} \label{teo.espec}
$\Sigma=\Sigma_{var}$.
\end{thm}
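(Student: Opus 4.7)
The plan is to establish $\Sigma \subset \Sigma_{\text{var}}$, since the reverse inclusion is already Theorem \ref{teo.lamk}. The strategy is to attach to every $\lambda \in \Sigma$ a nodal count of a corresponding eigenfunction, and then to argue that this count uniquely determines $\lambda$; matching with Theorem \ref{teo.zero} then forces $\lambda = \lambda_k$ for the appropriate $k$.

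Given $\lambda \in \Sigma$ with eigenfunction $u$, I would begin by invoking initial-value uniqueness for the weighted one-dimensional $p$-Laplacian under the standing bounds on $a$ and $\rho$, as developed in \cite{FBP, Walter}. This guarantees that the zeros of $u$ in $\overline J$ are isolated, so there are finitely many of them, producing a partition $0 = x_0 < x_1 < \cdots < x_k = \ell$ with nodal intervals $I_j = (x_{j-1}, x_j)$. On each $I_j$ the restriction $u|_{I_j}$ is a sign-definite Dirichlet solution of \eqref{autov.eq} on $I_j$, and by the characterization of the first eigenvalue via positivity of eigenfunctions (Theorem \ref{teo.lam1} applied to the sub-problem) $\lambda$ must coincide with the first Dirichlet eigenvalue $\mu_1(I_j)$ of \eqref{autov.eq} restricted to $I_j$. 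In particular the $k$ subintervals all share the same first eigenvalue, namely $\lambda$.

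The next step is a shooting argument. I would define $\tau(s, \mu)$ as the right endpoint of the unique interval $[s, \tau(s, \mu)]$ on which $\mu$ is the first Dirichlet eigenvalue of \eqref{autov.eq}; iterating this map from $0$ one obtains $T_k(\mu)$, the position of the $k$-th zero of the shooting solution. The whole argument reduces to showing that $\mu \mapsto \tau(s, \mu)$ is continuous and strictly decreasing, with $\tau(s, \mu) \downarrow s$ as $\mu \to \infty$ and $\tau(s, \mu) \to +\infty$ as $\mu \to 0^+$. Strict monotonicity comes from the variational formula for $\mu_1$ combined with Theorem \ref{SMP}: if $\mu_1(I) = \mu_1(I')$ for $I \subsetneq I'$, extending the positive first eigenfunction on $I$ by zero to $I'$ yields a nonnegative first eigenfunction on $I'$ whose zero set has positive measure, contradicting the strong maximum principle. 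Continuity in $\mu$ and in the endpoints follows from weak-limit arguments based on the coercivity \eqref{cont.coer.phi} and lower semicontinuity of $v \mapsto \int \Phi(x, \nabla v)$, as already used in the proof of Theorem \ref{teo3}. Once $T_k$ is continuous and strictly decreasing, $T_k(\mu) = \ell$ has at most one solution; by Theorem \ref{teo.zero} that unique solution is $\lambda_k$, so $\lambda = \lambda_k$.

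The main technical obstacle will be the rigorous verification of strict monotonicity and continuity of the shooting map, because classical Sturm or Prüfer comparison techniques require smoother coefficients than the $L^\infty$ bounds we impose on $a$ and $\rho$. My preferred route is to avoid pointwise ODE comparison and instead work entirely with the variational formula for $\mu_1(I)$, deriving monotonicity from the strong maximum principle and continuity from standard weak-convergence arguments for Rayleigh quotients on intervals with moving endpoints, keeping the argument consistent with the variational framework used throughout the paper.
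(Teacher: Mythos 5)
Your proposal is correct and takes essentially the same route as the paper, whose proof simply defers to Theorem 1.1 of \cite{FBP}: count the nodal domains of an eigenfunction of $\lambda\in\Sigma$, identify $\lambda$ with the first Dirichlet eigenvalue on each nodal interval, and use strict monotonicity and continuity of the first eigenvalue with respect to the interval to conclude $\lambda=\lambda_k$ via Theorem \ref{teo.zero}. Your choice to establish the monotonicity and continuity of the shooting map variationally (through the Rayleigh quotient and the strong maximum principle, rather than a Pr\"ufer-type comparison) is a sensible implementation detail that matches the tools already used in Lemma \ref{separa-ceros} and Theorem \ref{teo3}.
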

\begin{proof}
The proof of this theorem is completely analogous to that of Theorem 1.1 in
\cite{FBP}.
\end{proof}

\section{Eigenvalue homogenization}

In this section, as an application of the results in Section 3, we analyze the convergence of the spectrum $\Sigma_\ve$ of problem
\begin{equation}\label{pro1}
\begin{cases}
-\diver(a_\ve(x,\nabla u)) = \lambda^\ve \rho_\ve |u|^{p-2}u & \text{on }\Omega\\
u = 0 & \text{on }\partial\Omega
\end{cases}
\end{equation}
to the spectrum $\Sigma$ of the limit problem
\begin{equation}\label{limit.prob}
\begin{cases}
-\diver(a(x,\nabla u)) = \lambda \rho |u|^{p-2}u & \text{on }\Omega\\
u = 0 & \text{on }\partial\Omega
\end{cases}
\end{equation}
under the assumption that $\A_\ve$ $G-$converges to $\A$ and that $\rho_\ve\stackrel{*}{\rightharpoonup}\rho$ in $L^\infty(\Omega)$. Moreover, we assume that $a_\ve(x,\xi)$ satisfies (H0)--(H8) uniformly.

The result in this section are not original, since they were obtained in \cite{Ch-dP} (for $\rho_\ve, \rho\equiv 1$ though). Nevertheless, the proof that we provide are much simpler than those in \cite{Ch-dP}.

In the linear case, it is well known (see \cite{Allaire-book}) that the $G-$convergence of the
operators implies the convergence of their spectra in the sense that the $k$th--eigenvalue
$\lam_k^\ve$ converges to the $k$th--eigenvalue of the limit problem.

We want to study the convergence of the spectrum in the non-linear case. We begin with a general result for bounded sequences of eigenvalues. This result was already proved in \cite{Con} but we include here a simpler proof for the reader's convenience.

Along the proofs by normalized eigenfunctions we understand that $\|u\|_p=1$.

\begin{thm} \label{teo-converg}
Let $\Omega\subset \R^N$ be bounded. Let $\lam^\ve\in \Sigma_\ve$ be a sequence of eigenvalues of problems \eqref{pro1} with $\{u^\ve\}_{\ve>0}$ associated normalized eigenfunctions.

Assume that the sequence of eigenvalues is convergent
$$
\lim_{\ve\to 0^+}\lam^{\ve} = \lam.
$$
Then, $\lam\in \Sigma$ and there exists a sequence $\ve_j\to 0^+$ such that
$$
u^{\ve_j} \cd u \textrm{ weakly in } W^{1,p}_0(\Omega)
$$
with $u$ a normalized eigenfunction associated to $\lam$.
\end{thm}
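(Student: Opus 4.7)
My plan is to set up a standard compactness-plus-$G$-convergence argument and then identify the weak limit as a normalized eigenfunction of the limiting problem.

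First, I would use the coercivity hypothesis (H2) (which holds uniformly in $\ve$) together with the normalization $\|u^\ve\|_{L^p(\Omega)}=1$ and the uniform bound on $\rho_\ve$ to deduce that $\{u^\ve\}_{\ve>0}$ is bounded in $W^{1,p}_0(\Omega)$. Indeed, testing the weak formulation of \eqref{pro1} against $u^\ve$ gives
$$
\alpha\int_\Omega |\nabla u^\ve|^p\,dx \le \int_\Omega a_\ve(x,\nabla u^\ve)\nabla u^\ve\,dx = \lam^\ve\int_\Omega \rho_\ve |u^\ve|^p\,dx \le \lam^\ve\|\rho_\ve\|_\infty,
$$
which is bounded since $\{\lam^\ve\}$ converges. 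After passing to a subsequence, I obtain some $u\in W^{1,p}_0(\Omega)$ with $u^{\ve_j}\cd u$ weakly in $W^{1,p}_0(\Omega)$, $u^{\ve_j}\to u$ strongly in $L^p(\Omega)$ (and a.e.) by Rellich--Kondrachov. The normalization $\|u^{\ve_j}\|_{L^p}=1$ passes to the limit, so $\|u\|_{L^p}=1$; in particular $u\not\equiv 0$.

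Next, I would show that the right-hand sides $f_\ve := \lam^\ve \rho_\ve |u^\ve|^{p-2}u^\ve$ converge to $f:=\lam\rho|u|^{p-2}u$ strongly in $W^{-1,p'}(\Omega)$. The strong $L^p$-convergence of $u^{\ve_j}$ and continuity of $s\mapsto |s|^{p-2}s$ between $L^p$ and $L^{p'}$ give $|u^{\ve_j}|^{p-2}u^{\ve_j}\to |u|^{p-2}u$ strongly in $L^{p'}(\Omega)$. Combining this strong convergence with the weak-$*$ convergence $\rho_\ve\cde\rho$ in $L^\infty(\Omega)$ and the convergence $\lam^\ve\to\lam$ in $\R$ yields $f_{\ve_j}\cd f$ weakly in $L^{p'}(\Omega)$ (standard weak-times-strong argument). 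Since the embedding $L^{p'}(\Omega)\hookrightarrow W^{-1,p'}(\Omega)$ is compact (dual to Rellich--Kondrachov), weak convergence in $L^{p'}$ implies strong convergence in $W^{-1,p'}$, which is precisely what the $G$-convergence hypothesis requires.

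At this point I invoke the definition of $G$-convergence directly: since $\A_\ve$ $G$-converges to $\A$, the solutions $u^{\ve_j}$ of $\A_\ve u^{\ve_j}=f_{\ve_j}$ satisfy $u^{\ve_j}\cd \tilde u$ weakly in $W^{1,p}_0(\Omega)$, where $\tilde u$ is the unique weak solution of $\A\tilde u = f$. By uniqueness of the weak limit, $\tilde u = u$, so $u$ solves
$$
-\diver(a(x,\nabla u)) = \lam\rho|u|^{p-2}u \quad\text{in }\Omega,\qquad u=0\text{ on }\partial\Omega.
$$
Since $u\ne 0$, this exhibits $\lam\in\Sigma$ with $u$ as an associated normalized eigenfunction, completing the proof.

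The step I expect to require the most care is the strong $W^{-1,p'}$-convergence of the right-hand sides: one must correctly combine the weak-$*$ convergence of $\rho_\ve$ with the strong $L^{p'}$-convergence of $|u^\ve|^{p-2}u^\ve$ (weak-$*$ times strong gives only weak convergence in $L^{p'}$) and then invoke the compactness of $L^{p'}\hookrightarrow W^{-1,p'}$ to upgrade to the strong topology needed by the $G$-convergence hypothesis. Once this is in place, the rest of the argument is essentially a direct application of the definition of $G$-convergence.
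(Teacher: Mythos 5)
Your proposal is correct and follows essentially the same route as the paper: uniform coercivity plus normalization gives boundedness in $W^{1,p}_0(\Omega)$, the right-hand sides $f_\ve=\lam^\ve\rho_\ve|u^\ve|^{p-2}u^\ve$ are shown to converge strongly in $W^{-1,p'}(\Omega)$, and the $G$-convergence hypothesis identifies the weak limit as the solution of the limit equation, hence a normalized eigenfunction. Your treatment of the weak-times-strong convergence and the compact embedding $L^{p'}(\Omega)\hookrightarrow W^{-1,p'}(\Omega)$ is in fact slightly more careful than the paper's own wording.
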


\begin{rem}\label{remk}
In most applications, we take the sequence $\lam^\ve$ to be the sequence of the $k$th--variational eigenvalue of \eqref{pro1}. In this case, it is not difficult to check that the sequence $\{\lam^\ve_k\}_{\ve>0}$ is bounded and so, up to a subsequence, convergent.

In fact, by using the variational characterization of $\lam_k^\ve$, \eqref{cont.coer.phi} and our assumptions on $\rho$ we have that
$$
\frac{\alpha}{\rho^+} \frac{\int_\Omega   |\nabla v|^p}{\int_{\Omega}  |v|^p}  \leq \frac{\int_\Omega \Phi_\ve(x,\nabla v)}{\int_{\Omega}  \rho_\ve |v|^p} \leq \frac{\beta}{\rho^-}\frac{\int_\Omega   |\nabla v|^p }{\int_{ \Omega}  |v|^p},
$$
therefore
$$
\frac{\alpha}{\rho^+} \mu_k \leq \lam_k^\ve \leq  \frac{\beta}{\rho^-} \mu_k
$$
where $\mu_k$ is the $k$th variational eigenvalue of the $p-$Laplacian.
\end{rem}

\begin{proof}
As $\lam_\ve$ is bounded and $u^\ve$ is normalized, by  (H2) it follows that the sequence $\{u^\ve\}_{\ve>0}$ is bounded in $W^{1,p}_0(\Omega)$.

Therefore, up to some sequence $\ve_j\to 0$, we have that
\begin{equation} \label{ecc}
\begin{split}
&u^{\ve_j} \cd u \quad \textrm{ weakly in } W^{1,p}_0(\Omega)\\
&u^{\ve_j} \cf u \quad \textrm{ strongly in } L^p(\Omega).
\end{split}
\end{equation}
with $u$ also normalized.

We define the sequence of functions $f_{\ve}:=\lam^\ve \rho_\ve |u^\ve|^{p-2} u^\ve$. By using the fact that $\rho_\ve \cd \rho$ *-weakly in $L^\infty(\Omega)$ together with \eqref{ecc} it follows that
$$
f_{\ve_j} \cd f:=\lam \rho|u|^{p-2}u \quad \textrm{ weakly in } L^p(\Omega)
$$
and therefore
$$
f_{\ve_j} \cf f \quad \textrm{ strongly in } W^{-1,p'}(\Omega).
$$

By  Proposition \ref{Gconv.ve} we deduce that $u^{\ve_j}$ converges weakly in $W^{1,p}_0(\Omega)$ to the unique solution $v$ of the homogenized problem
\begin{equation*}
\begin{cases}
-div(a(x,\nabla v)) =\lam \rho |u|^{p-2}u &\quad \textrm{ in } \Omega \\
v = 0  &\quad \textrm{ on } \partial \Omega.
\end{cases}
\end{equation*}
By uniqueness of the limit, $v=u$ is a normalized eigenfunction of the homogenized problem.
\end{proof}

\begin{rem}
In the case where the sequence $\lam^\ve$ is the sequence of the $k$th--variational eigenvalues of \eqref{pro1} it would be desirable to prove that it converges to the $k$th--variational eigenvalue of the homogenized problem \eqref{limit.prob} (see Remark \ref{remk}).

Unfortunately, our method only allow us to treat the first and second variational eigenvalues in the general setting. In the one dimensional case, one can be more precise and this fact holds true. See \cite{Ch-dP} for a general proof of this fact using the $\Gamma-$convergence method.
\end{rem}

\subsection{Convergence of the first and second eigenvalue}
The first eigenvalue of \eqref{pro1} is the infimum of the Rayleigh quotient
$$
\lam_1^\ve=\inf_{v \in W^{1,p}_0(\Omega)} \frac{\int_\Omega \Phi_\ve(x,\nabla v)}
{\int_\Omega \rho_\ve |v|^p}.
$$
In the following result we prove the convergence of $\lam_1^\ve$ when $\ve$ tends to zero. \begin{thm} \label{teo-1era-autof} Let be $\lam_1^\ve$ the first eigenvalue of \eqref{pro1} and $\lam_1$ the first eigenvalue of the limit problem \eqref{limit.prob}, then
$$
\lim_{\ve \cf 0} \lam_1^\ve = \lam_1.
$$
Moreover, if $u_1^\ve$ and $u_1$ are the (normalized) nonnegative eigenfunctions of \eqref{pro1} and \eqref{limit.prob} associated to $\lam_1^\ve$ and $\lam_1$ respectively, then
$$
u_1^\ve \cd u_1 \quad \text{weakly in } W^{1,p}_0(\Omega).
$$
\end{thm}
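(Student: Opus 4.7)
The plan is to combine a uniform boundedness estimate for $\{\lambda_1^\ve\}_{\ve>0}$ with the general subsequential convergence result (Theorem \ref{teo-converg}), and then to pin down the limit using the characterization of $\lambda_1$ as the unique element of $\Sigma$ admitting a nonnegative eigenfunction. A standard ``every subsequence has a further subsequence converging to the same limit'' argument then upgrades the subsequential convergence to full-sequence convergence for both $\lambda_1^\ve$ and $u_1^\ve$.

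First, I would invoke the bounds of Remark \ref{remk} with $k=1$ to see that $\{\lambda_1^\ve\}_{\ve>0}$ lies in a compact subinterval of $(0,\infty)$, of the form $[\alpha\mu_1/\rho^+,\beta\mu_1/\rho^-]$ where $\mu_1$ is the first Dirichlet eigenvalue of the $p$-Laplacian on $\Omega$. Consequently, from any subsequence one can extract a further subsequence $\lambda_1^{\ve_j}\to\lambda^*>0$. Applying Theorem \ref{teo-converg} to this sub-subsequence and passing to a further extraction yields $u_1^{\ve_j}\rightharpoonup u^*$ weakly in $W^{1,p}_0(\Omega)$, where $u^*$ is a normalized eigenfunction of \eqref{limit.prob} associated to $\lambda^*$.

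The decisive step is the identification $\lambda^*=\lambda_1$ and $u^*=u_1$. Using the compact embedding $W^{1,p}_0(\Omega)\hookrightarrow L^p(\Omega)$ and extracting once more, we may assume $u_1^{\ve_j}\to u^*$ a.e.\ in $\Omega$. Since $u_1^{\ve_j}\ge 0$, we obtain $u^*\ge 0$, and the normalization $\|u^*\|_p=1$ rules out triviality. By Proposition \ref{teo2}, any eigenfunction of \eqref{limit.prob} corresponding to an eigenvalue different from $\lambda_1$ must change sign; hence necessarily $\lambda^*=\lambda_1$. The simplicity of $\lambda_1$ (Theorem \ref{teo.lam1}) then forces $u^*$ to be a scalar multiple of $u_1$, and the sign condition together with the normalization pin it down uniquely to $u^*=u_1$.

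Because every subsequence of $\{\lambda_1^\ve\}$ (respectively $\{u_1^\ve\}$) has a further subsequence converging to the same limit $\lambda_1$ (respectively $u_1$), the entire sequence converges, yielding both conclusions of the theorem. I expect the principal technical burden to sit inside Theorem \ref{teo-converg}, which packages the $G$-convergence hypothesis together with the monotonicity trick needed to identify the limit equation; once that is invoked, the only conceptual obstacle is ruling out that a subsequence of $\lambda_1^\ve$ jumps to a higher eigenvalue in the limit, and this is precisely what the sign-change Proposition \ref{teo2}, combined with the nonnegativity inherited from the $u_1^{\ve_j}$, prevents.
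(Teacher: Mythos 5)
Your proposal is correct and follows essentially the same route as the paper's proof: uniform boundedness of $\lambda_1^\ve$, subsequential convergence via Theorem \ref{teo-converg}, identification of the limit through the nonnegativity of the limiting eigenfunction together with the simplicity of $\lambda_1$, and a subsequence argument to upgrade to convergence of the whole sequences. You merely make explicit some details (the appeal to Proposition \ref{teo2} and the a.e.\ convergence used to transfer the sign condition to the limit) that the paper's shorter argument leaves implicit.
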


\begin{rem}
In \cite{Con} using the theory of convergence of monotone operators the authors obtain the conclusions of Theorem \ref{teo-1era-autof}. We propose here a simple proof of this result which exploits the fact that the first eigenfunction has constant sign.
\end{rem}

\begin{proof}
Let $u_1^\ve$ be the nonnegative normalized eigenfunction associated to $\lam_1^\ve$, the uniqueness of $u_1^\ve$ follows from Theorem \ref{teo.lam1}.

By Theorem \ref{teo-converg}, up to some sequence, $u_1^\ve$ converges weakly in $W^{1,p}_0(\Omega)$ to $u$, an eigenfunction of the homogenized eigenvalue problem associated to $\lam=\lim_{\ve\to 0}\lam_1^\ve$.

But then, $u$ is a nonnegative normalized eigenfunction of the homogenized problem \eqref{limit.prob} and so $u=u_1$. Therefore $\lam=\lam_1$ and the uniqueness imply that the whole sequences $\lam^\ve_1$ and $u_1^\ve$ are convergent.
\end{proof}

Now we turn our attention to the second eigenvalue. For this purpose we use the fact that eigenfunctions associated to the second variational eigenvalue of problems \eqref{pro1} and \eqref{limit.prob} have, at least, two nodal domains (cf. Proposition \ref{teo2}).
\begin{thm}\label{converg-2do-autov}
Let $\lam_2^\ve$ be the second eigenvalue of \eqref{pro1} and $\lam_2$ be the second eigenvalue of the homogenized problem \eqref{limit.prob}. Then
$$
\lim_{\ve \cf 0} \lam_2^\ve = \lam_2
$$
\end{thm}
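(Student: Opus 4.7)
The plan is to establish the matching inequalities $\liminf_{\ve\to 0}\lam_2^\ve \ge \lam_2$ and $\limsup_{\ve\to 0}\lam_2^\ve \le \lam_2$, after first noting that the sequence $\{\lam_2^\ve\}_{\ve>0}$ is uniformly bounded from above and below by positive constants, by Remark~\ref{remk}.

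\textbf{Lower bound.} Let $\lam_2^{\ve_j}\to \bar\lam$ be any convergent subsequence with associated normalized eigenfunctions $u_2^{\ve_j}$. By Theorem~\ref{teo-converg}, along a further subsequence $u_2^{\ve_j}\to u$ strongly in $L^p(\Omega)$ and a.e., where $u$ is a normalized eigenfunction of $\bar\lam\in\Sigma$. Since $\bar\lam\in\Sigma$, by Theorem~\ref{teo4} it suffices to prove $\bar\lam>\lam_1$, for then $\bar\lam\ge \Lam_2=\lam_2$. Suppose to the contrary that $\bar\lam=\lam_1$. By Theorem~\ref{teo.lam1}, $u$ has constant sign on $\Omega$, say $u>0$ a.e. Then pointwise convergence gives $\mathbf{1}_{\{u_2^{\ve_j}<0\}}\to 0$ a.e.\ on $\Omega$, and dominated convergence yields $|\{u_2^{\ve_j}<0\}|\to 0$. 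On the other hand, Proposition~\ref{teo2} applied to $u_2^{\ve_j}$ gives $|\{u_2^{\ve_j}<0\}|\ge C\,(\lam_2^{\ve_j})^{-\gamma}$, where $C$ depends only on $N,p,\rho^+$ and the coercivity constant $\alpha$ in (H2)—all uniform in $\ve$. Together with the upper bound on $\lam_2^{\ve_j}$ this forces $|\{u_2^{\ve_j}<0\}|\ge c_0>0$ uniformly, a contradiction. Hence $\bar\lam>\lam_1$, proving the lower bound.

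\textbf{Upper bound.} Fix an eigenfunction $u_2$ of the limit problem \eqref{limit.prob} associated to $\lam_2$. By Proposition~\ref{teo2}, $u_2$ changes sign; pick connected components $\Omega^+\subset\{u_2>0\}$ and $\Omega^-\subset\{u_2<0\}$. On each $\Omega^\pm$, $u_2|_{\Omega^\pm}$ is a constant-sign eigenfunction of the restricted Dirichlet problem with eigenvalue $\lam_2$, so by Theorem~\ref{teo.lam1} the number $\lam_2$ coincides with the first eigenvalue $\lam_1(\Omega^\pm,a,\rho)$. For each $\ve$, let $\mu_\ve^\pm$ denote the first eigenvalue of $\A_\ve$ on $\Omega^\pm$ with weight $\rho_\ve$, and let $v_\ve^\pm\in W^{1,p}_0(\Omega^\pm)\subset W^{1,p}_0(\Omega)$ be the positive normalized first eigenfunction. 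Since $G$-convergence is local (restriction to $\Omega^\pm$ preserves $G$-convergence), Theorem~\ref{teo-1era-autof} applied on $\Omega^\pm$ gives $\mu_\ve^\pm\to \lam_2$. Because $v_\ve^+$ and $v_\ve^-$ have disjoint supports and are nonzero, $V_\ve=\mathrm{span}(v_\ve^+,v_\ve^-)$ is two-dimensional, so $C_\ve=V_\ve\cap S$, with $S$ the unit sphere of $W^{1,p}_0(\Omega)$, is a closed symmetric set of Krasnoselskii genus $2$, i.e.\ $C_\ve\in\C_2$. The variational characterization of $\lam_2^\ve$ (Theorem~\ref{teo.lamk}) gives
$$
\lam_2^\ve \le \sup_{v\in C_\ve}\frac{\int_\Omega \Phi_\ve(x,\nabla v)}{\int_\Omega \rho_\ve|v|^p}.
$$
For $v=av_\ve^++bv_\ve^-$, the $p$-homogeneity of $\Phi_\ve$, the disjoint supports, and the identity $\int \Phi_\ve(x,\nabla v_\ve^\pm)=\mu_\ve^\pm \int \rho_\ve (v_\ve^\pm)^p$ (which comes from multiplying the eigenvalue equation by $v_\ve^\pm$ and using $a_\ve(x,\xi)\xi=\Phi_\ve(x,\xi)$) reduce the Rayleigh quotient to a convex combination of $\mu_\ve^+$ and $\mu_\ve^-$. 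Hence $\sup_{v\in C_\ve}R_\ve(v)\le \max(\mu_\ve^+,\mu_\ve^-)\to \lam_2$, so $\limsup_{\ve\to 0}\lam_2^\ve \le \lam_2$, completing the proof.

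\textbf{Main obstacle.} The most delicate point is the stability of $G$-convergence under restriction to the nodal subdomains $\Omega^\pm$, which is what allows Theorem~\ref{teo-1era-autof} to be applied on those subdomains and yields $\mu_\ve^\pm\to\lam_2$. This locality property is standard in the linear and $p$-Laplacian homogenization literature but is not proved in the paper; one checks it directly from the definition by noting that $W^{1,p}_0(\Omega^\pm)\subset W^{1,p}_0(\Omega)$ and testing against right-hand sides supported in $\Omega^\pm$. A secondary technical issue, easily handled via the $p$-quasi continuous representative used already in Theorem~\ref{SMP}, is interpreting $\{u_2>0\}$ as an open set so that the restricted Dirichlet problem on $\Omega^\pm$ is well posed.
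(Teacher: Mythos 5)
Your proposal is correct and follows essentially the same route as the paper: the upper bound via the genus-two set spanned by the (disjointly supported) first eigenfunctions of $\A_\ve$ on the nodal domains of $u_2$ together with Theorem \ref{teo-1era-autof}, and the lower bound via the uniform measure estimate of Proposition \ref{teo2} to exclude $\bar\lam=\lam_1$, then Theorem \ref{teo4}. If anything, your write-up is slightly more careful than the paper's on two points it glosses over --- the locality of $G$-convergence needed to apply Theorem \ref{teo-1era-autof} on $\Omega^\pm$, and the dominated-convergence step showing $|\{u_2^{\ve_j}<0\}|\to 0$.
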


\begin{proof}
Let $u_2$ be a normalized eigenfunction associated to $\lam_2$ and let $\Omega^{\pm}$ be the positivity and the negativity sets of $u_2$ respectively. By standard elliptic regularity theory, $\Omega^{\pm}$ are open sets. Now, the previous result about the positivity of the first eigenfunction implies that the restrictions of $u_2$ to $\Omega^{\pm}$ are the first eigenfunctions of the problem in those sets.

We denote by $u^\ve_\pm$ the first eigenfunction of \eqref{pro1} in $\Omega^\pm$ respectively. Extending $u_\pm^\ve$ to $\Omega$ by 0, these function have disjoint supports and therefore they are linearly independent in $W^{1,p}_0(\Omega)$.

Let $S$ be the unit sphere in $W^{1,p}_0(\Omega)$ and we define the set $C_2^\ve$ as
$$
C_2^\ve:=\textrm{span}\{u^\ve_+,u^\ve_-\}\cap S.
$$

Clearly $C_2^\ve$ is compact, symmetric and $\gamma(C_2^\ve)=2$. Hence,
$$
\lam_2^\ve = \inf_{C\in \Gamma_2} \sup_{v  \in C} \frac{\int_\Omega \Phi_\ve(x,\nabla v)}{\int_\Omega \rho_\ve |v|^p} \leq \sup_{v \in C_2^\ve} \frac{\int_\Omega \Phi_\ve(x,\nabla v)}{\int_\Omega \rho_\ve |v|^p}
$$

As $C_2^\ve$ is compact, the supremum is achieved for some $v^\ve \in C_2^\ve$ which can be written as
$$
v^\ve = a_\ve u^{\ve}_+ + b_\ve u^{\ve}_-
$$
with $a_\ve, b_\ve\in \R$ such that $|a_\ve|^p+|b_\ve|^p=1$. Since the functions $u^\ve_+$ and $u^\ve_-$ have disjoint supports, we obtain, using the $p-$homogeneity of $\Phi_\ve$ (see Proposition \ref{potential.f}),
$$
\lam_2^\ve \leq \frac{\int_\Omega \Phi_\ve(x,\nabla v^\ve)}{\int_\Omega \rho_\ve |v^\ve|^p}  = \frac{|a_\ve|^p \int_{\Omega^+} \Phi_\ve(x,\nabla u^\ve_+) + |b_\ve|^p \int_{\Omega^-} \Phi_\ve(x,\nabla u^\ve_-)}{\int_\Omega \rho_\ve |v^\ve|^p}
$$
Using the definition of $u^\ve_\pm$, the above inequality can be rewritten as
\begin{equation}\label{cota.ep}
\lam_2^\ve \leq \frac{|a_\ve|^p \lam_{1,+}^\ve\int_{\Omega^+}\rho_\ve |u^\ve_+|^p + |b_\ve|^p \lam_{1,-}^\ve\int_{\Omega^-}\rho_\ve |u^\ve_-|^p}{\int_\Omega \rho_\ve |v^\ve|^p}\le \max\{\lambda_{1,+}^\ve, \lambda_{1,-}^\ve\}
\end{equation}
where $\lam_{1,\pm}^\ve$ is the first eigenvalue of \eqref{pro1} in the nodal domain $\Omega^\pm$ respectively.

Now, using Theorem \ref{teo-1era-autof}, we have that $\lam_{1,\pm}^\ve \cf \lam_{1,\pm}$
respectively, where $\lambda_{1,\pm}$ are the first eigenvalues of \eqref{limit.prob} in the
domains $\Omega^\pm$ respectively.  Moreover, we observe that these eigenvalues
$\lam_{1,\pm}$ are both equal to the second eigenvalue $\lam_2$ in $\Omega$, therefore
from \eqref{cota.ep}, we get
$$
\lam_2^\ve \le \lam_2+\delta
$$
for $\delta$ arbitrarily small and $\ve$ tending to zero. So,
\begin{equation}\label{cota1}
\limsup_{\ve \cf 0} \lam_2^\ve \leq \lam_2
\end{equation}

On the other hand, suppose that $\lim_{\ve \cf 0} \lam_2^\ve = \lambda$ where $\lambda\in \Sigma_h$. We claim that $\lambda>\lambda_1$.

In fact, we have that $u_2^\ve \cd u$ in $W^{1,p}_0(\Omega)$ where $u$ is a normalized eigenfunction associated to $\lambda$. As the measure of the positivity and negativity sets of $u_2^\ve$ are bounded below uniformly in $\ve>0$ (see Proposition \ref{teo2}), we have that either $u$ changes sign or $|\{ u=0 \}|>0$. In any case, this implies our claim.

Then, as $\lambda>\lambda_1$ it must be $\lambda\geq \lam_2$. Then
\begin{equation}\label{cota2}
\lam_2 \leq \lambda =\lim_{\ve \cf 0} \lam_2^\ve
\end{equation}
Combining \eqref{cota1} and \eqref{cota2} we obtain the desired result.
\end{proof}

\subsection{Convergence of the full spectrum in the one dimensional case}
The goal of this subsection is to prove the following Theorem

\begin{thm}\label{teo.gral}
Let $N=1$ and assume that $\A_\ve$ $G-$converges to $\A$ and that $\rho_\ve\cd \rho$ weakly* in $L^\infty(I)$. For each $k\geq 1$ let $\lam_k^\ve$ be the $k$-th eigenvalue of \eqref{pro1}. Then we have that
$$
\lim_{\ve \cf 0} \lam_k^\ve = \lam_k,
$$
where $\lam_k$ the $k-$th eigenvalue of \eqref{limit.prob}.

Moreover, up to a subsequence, an eigenfunction $u_k^\ve$  associated to $\lam_k^\ve$ converges weakly in $W^{1,p}_0(I)$ to $u_k$, an eigenfunction associated to $\lam_k$.
\end{thm}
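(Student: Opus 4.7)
The plan is to combine Theorem \ref{teo-converg} with the sharp nodal characterization of the one-dimensional spectrum given by Theorem \ref{teo.zero} and Theorem \ref{teo.espec}. By Remark \ref{remk}, the sequence $\{\lam_k^\ve\}_{\ve>0}$ is bounded, so we can extract a subsequence $\lam_k^{\ve_j}\to\lam^*$. Theorem \ref{teo-converg} then gives $\lam^*\in\Sigma$ together with a weak convergence $u_k^{\ve_j}\cd u^*$ in $W^{1,p}_0(I)$, where $u^*$ is a normalized eigenfunction associated to $\lam^*$. Since $W^{1,p}_0(I)\hookrightarrow C(\bar I)$ compactly in one dimension, the convergence is in fact uniform on $\bar I$. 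It then suffices to identify $\lam^*=\lam_k$, after which simplicity (Theorem \ref{teo.zero}) forces the whole sequence to converge and the eigenfunction claim follows.

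For the upper bound $\limsup_{\ve\to 0}\lam_k^\ve\le\lam_k$ I would generalize the argument used for the second eigenvalue in Theorem \ref{converg-2do-autov}. By Theorem \ref{teo.zero}, an eigenfunction $u_k$ of the limit problem associated to $\lam_k$ has exactly $k-1$ interior zeros, partitioning $I$ into nodal subintervals $I_1,\dots,I_k$; on each $I_j$ the restriction of $u_k$ is, up to sign, the first positive eigenfunction with eigenvalue precisely $\lam_k$. For each $j$, let $u_j^\ve$ be the first eigenfunction of \eqref{pro1} on $I_j$, extended by zero to $I$, with first eigenvalue $\lam_{1,j}^\ve$. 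The extensions have disjoint supports, so they are linearly independent and the compact symmetric set
$$
C_k^\ve:=\mathrm{span}\{u_1^\ve,\dots,u_k^\ve\}\cap S
$$
has Krasnoselskii genus $\gamma(C_k^\ve)=k$. Using disjointness of supports and the $p$-homogeneity of $\Phi_\ve$ exactly as in \eqref{cota.ep}, the variational characterization gives
$$
\lam_k^\ve \le \sup_{v\in C_k^\ve}\frac{\int_I\Phi_\ve(x,v')}{\int_I\rho_\ve|v|^p}\le \max_{1\le j\le k}\lam_{1,j}^\ve.
$$
By Theorem \ref{teo-1era-autof} applied on each fixed $I_j$, $\lam_{1,j}^\ve\to\lam_{1,j}=\lam_k$, proving the upper bound.

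For the lower bound I would argue by nodal analysis. By Theorem \ref{teo.zero}, $u_k^\ve$ has exactly $k-1$ interior zeros $0<x_1^\ve<\dots<x_{k-1}^\ve<\ell$; on each nodal interval it is a first eigenfunction with eigenvalue $\lam_k^\ve$. Since on each subinterval the first eigenvalue is bounded below, via (H2) and Poincar\'e's inequality, by a positive constant times $|I_j^\ve|^{-p}$, the boundedness of $\lam_k^\ve$ forces a uniform lower bound $x_{j+1}^\ve-x_j^\ve\ge c(\lam_k^\ve)^{-1/p}\ge c'>0$ on the length of every nodal interval. Passing to a further subsequence, $x_j^{\ve_j}\to x_j^*$ with $0<x_1^*<\dots<x_{k-1}^*<\ell$ all distinct, and the uniform convergence $u_k^{\ve_j}\to u^*$ on $\bar I$ guarantees $u^*(x_j^*)=0$ with $u^*$ non-trivial. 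Thus $u^*$ has at least $k-1$ interior zeros; by Theorem \ref{teo.zero} together with $\Sigma=\Sigma_{\mathrm{var}}$ (Theorem \ref{teo.espec}), the associated eigenvalue $\lam^*$ cannot be among $\lam_1,\dots,\lam_{k-1}$, hence $\lam^*\ge\lam_k$. Combined with the upper bound we get $\lam^*=\lam_k$, and uniqueness of the limit together with simplicity promotes the subsequential convergence to full convergence of both $\lam_k^\ve$ and $u_k^\ve$.

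The main obstacle is the uniform nodal separation in the lower bound: one must rule out two consecutive zeros of $u_k^\ve$ clustering as $\ve\to 0^+$, since otherwise the limit $u^*$ could fail to record $k-1$ distinct zeros and we would lose the lower bound. The key observation that unlocks this is that on each nodal interval $u_k^\ve$ is the \emph{first} Dirichlet eigenfunction with eigenvalue $\lam_k^\ve$, so coercivity (H2) plus the one-dimensional Poincar\'e inequality force the interval length to exceed $c(\lam_k^\ve)^{-1/p}$; given the \emph{a priori} bound on $\lam_k^\ve$ from Remark \ref{remk}, this delivers the uniform separation and closes the argument.
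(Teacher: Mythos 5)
Your proposal is correct and follows essentially the same route as the paper: the upper bound via the genus-$k$ set spanned by first eigenfunctions on the nodal domains of the limit eigenfunction, and the lower bound via a uniform positive lower bound on the length of each nodal interval of $u_k^\ve$ (your Poincar\'e/coercivity estimate is exactly the content of the paper's Lemma \ref{separa-ceros}), combined with Theorem \ref{teo.zero}. You merely spell out more explicitly than the paper does how the separated zeros pass to the limit to force $\lam^*\ge\lam_k$.
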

 The
main tool that allows us to prove that $\lambda=\lambda_k$ is Theorem \ref{teo.zero}
that says that any eigenfunction associated to the $k-$th eigenvalue of \eqref{pro1} has
exactly $k$ nodal domains.

Moreover, we need a refinement of this result, namely an estimate on the measure of
each nodal domain independent on $\ve$. This is the content of the next Lemma.

\begin{lema} \label{separa-ceros}
 Let $\lam_k^\ve$ be a eigenvalue of \eqref{pro1} with corresponding eigenfunction $u_k^\ve$. Let $\mathcal{N}=\mathcal{N}(k,\ve)$ be a nodal domain of $u_k^\ve$. We have that
$$|\mathcal{N}|>C$$
where $C=C(k)$ is a positive constant independent of $\ve$.
\end{lema}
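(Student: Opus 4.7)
The plan is to reduce the problem to the known lower bound for the first eigenvalue on an interval, and then control the upper bound for $\lambda_k^\ve$ by comparison with the $p$-Laplacian spectrum.

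Since $N=1$, the nodal domain $\mathcal{N}$ is an open interval, say $\mathcal{N}=(\alpha_\ve,\beta_\ve)\subset I$. The restriction $u_k^\ve|_\mathcal{N}$ vanishes on $\partial \mathcal{N}$, has constant sign on $\mathcal{N}$, and satisfies the eigenvalue equation with eigenvalue $\lambda_k^\ve$ in $\mathcal{N}$. By the characterization of the first eigenvalue as the only eigenvalue admitting a nonnegative eigenfunction (Theorem \ref{teo.lam1}, applied on the domain $\mathcal{N}$), we get
$$
\lambda_k^\ve = \lambda_1^\ve(\mathcal{N}),
$$
where $\lambda_1^\ve(\mathcal{N})$ denotes the first eigenvalue of the operator $\A_\ve$ on $\mathcal{N}$ with Dirichlet boundary conditions and weight $\rho_\ve$.

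Next, I would bound $\lambda_1^\ve(\mathcal{N})$ from below using the two-sided inequality derived from \eqref{cont.coer.phi} and the bounds on $\rho_\ve$: for every $v\in W^{1,p}_0(\mathcal{N})$,
$$
\frac{\int_\mathcal{N} \Phi_\ve(x,v')\,dx}{\int_\mathcal{N}\rho_\ve |v|^p\,dx} \ge \frac{\alpha}{\rho^+}\,\frac{\int_\mathcal{N} |v'|^p\,dx}{\int_\mathcal{N} |v|^p\,dx}.
$$
Taking the infimum over $v$ and invoking Theorem \ref{draman}, which gives the first eigenvalue of the $p$-Laplacian on an interval of length $|\mathcal{N}|$ as $\pi_p^p/|\mathcal{N}|^p$, we obtain
$$
\lambda_k^\ve = \lambda_1^\ve(\mathcal{N}) \ge \frac{\alpha}{\rho^+}\,\frac{\pi_p^p}{|\mathcal{N}|^p}.
$$

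Finally, I would control $\lambda_k^\ve$ from above, uniformly in $\ve$, by the bound of Remark \ref{remk}:
$$
\lambda_k^\ve \le \frac{\beta}{\rho^-}\,\mu_k,
$$
where $\mu_k$ is the $k$-th variational eigenvalue of the $p$-Laplacian on $I$, a quantity depending only on $k$, $p$ and $|I|$. Combining the two bounds yields
$$
|\mathcal{N}|^p \ge \frac{\alpha\rho^-}{\beta\rho^+}\,\frac{\pi_p^p}{\mu_k},
$$
which gives the desired constant $C=C(k)$ independent of $\ve$. There is essentially no difficulty here beyond bookkeeping: both ingredients (the first-eigenvalue identification on $\mathcal{N}$ via positivity, and the uniform upper bound for $\lambda_k^\ve$) have already been established in the previous sections, and the one-dimensionality is used only so that each nodal domain is an interval on which Theorem \ref{draman} applies.
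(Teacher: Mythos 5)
Your proof is correct and follows essentially the same route as the paper's: identify $\lambda_k^\ve$ with the first Dirichlet eigenvalue on the nodal interval $\mathcal{N}$, bound it below by $\tfrac{\alpha}{\rho^+}\pi_p^p/|\mathcal{N}|^p$ via the coercivity bounds, and bound it above uniformly in $\ve$ by $\tfrac{\beta}{\rho^-}\mu_k$. The only (harmless) difference is that you spell out the justification of $\lambda_k^\ve=\lambda_1^\ve(\mathcal{N})$ via the positivity characterization, which the paper simply asserts.
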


\begin{proof}
We can write $\lam_k^\ve$ as
$$
\lam_k^\ve(I) = \lam_1^\ve(\mathcal{N}) = \inf_{u\in W^{1,p}_0(\mathcal{N})}  \frac{\int_\mathcal{N} a_\ve(x) |u'|^p}{\int_\mathcal{N} \rho_\ve(x) |u|^p},
$$
by our assumptions \eqref{cota.rho} we get
$$
\lam_k^\ve(I) \ge \frac{\alpha}{\rho_+} \mu_1(\mathcal{N}) =\frac{\alpha}{\rho_+}\frac{\pi_p^p}{|\mathcal{N}|^p}
$$
where $\mu_1(\mathcal N)$ is the first eigenvalue of the $p-$Laplacian on $\mathcal{N}$.
Moreover,
$$
\lam_k^\ve(I)\leq \frac{\beta}{\rho_-} \mu_k(I) = \frac{\beta}{\rho_-}\pi_p^p k^p.
$$
Combining both inequalities we get
$$
|\mathcal{N}|^p\geq \frac{\alpha}{\rho_+}\frac{\pi_p^p}{\lam_k^\ve(\Omega)}\geq \frac{\alpha}{\beta}\frac{\rho_-}{\rho_+} \frac{1}{k^p}
$$
and the result follows.
\end{proof}
Now we are ready to establish the main result of this section:

\begin{proof}[Proof of Theorem \ref{teo.gral}]
Let $u_k$ be a normalized eigenfunction associated to $\lam_k$ and according to Theorem \ref{teo.zero}, let $I_i$, $i=1,\ldots, k$ be the nodal domains of $u_k$.

We denote by $u^\ve_i$ the first eigenfunction of \eqref{pro1} in $I_i$ respectively. Extending $u_i^\ve$ to $I$ by 0, these function have disjoint supports
and therefore they are linearly independent in $W^{1,p}_0(I)$.

Let $S$ be the unit sphere in $W^{1,p}_0(I)$ and we define the set $C_k^\ve$ as
$$
C_k^\ve:=\textrm{span}\{u^\ve_1,\ldots, u^\ve_k\}\cap S.
$$

Clearly $C_k^\ve$ is compact, symmetric and $\gamma(C_k^\ve)=k$. Hence,
$$
\lam_k^\ve = \inf_{C\in \Gamma_k} \sup_{v  \in C} \frac{\int_I a_\ve(x)|v'|^p}{\int_I \rho_\ve |v|^p} \leq \sup_{v \in C_k^\ve} \frac{\int_I a_\ve(x) |v'|^p}{\int_I \rho_\ve |v|^p}
$$

As $C_k^\ve$ is compact, the supremum is achieved for some $v^\ve \in C_k^\ve$ which can be written as
$$
v^\ve = \sum_{i=1}^k a_i^\ve u^{\ve}_i
$$
with $a_i^\ve\in \R$ such that $\sum_{i=1}^k |a_i^\ve|^p=1$. Since the functions $u^\ve_i$ have non-overlapping supports, we obtain
$$
\lam_k^\ve \leq \frac{\int_I a_\ve(x)|{v^\ve} '|^p}{\int_I \rho_\ve |v^\ve|^p}  = \frac{ \sum_{i=1}^k |a_i^\ve|^p \int_{I_i} a_\ve(x) |{u_i^\ve}'|^p }{\int_I \rho_\ve |v^\ve|^p}
$$
Using the definition of $u^\ve_i$, the above inequality can be rewritten as
\begin{equation}\label{cota.epr}
\lam_k^\ve \leq \frac{ \sum_{i=1}^k|a^\ve_i|^p \lam_{1,i}^\ve\int_{I_i}\rho_\ve |u^\ve_i|^p}{\int_I \rho_\ve |v^\ve|^p}\le \max_{1\leq i \leq k} \{\lambda_{1,i}^\ve\}
\end{equation}
where $\lam_{1,i}^\ve$ is the first eigenvalue of \eqref{pro1} in the nodal domain $\Omega_i$ respectively.

Now, using that $\lam_{1,i}^\ve \cf \lam_{1,i}$ respectively , where $\lambda_{1,i}$ are the
first eigenvalues of \eqref{limit.prob} in the domains $I_i$ respectively (see Theorem 4.4,
\cite{FBPS}).  Moreover, we observe that these eigenvalues $\lam_{1,i}$ are all equal to the
$k-$th eigenvalue $\lam_k$ in $I$, therefore from \eqref{cota.epr}, we get
$$
\lam_k^\ve \le \lam_k+\delta
$$
for $\delta$ arbitrarily small and $\ve$ tending to zero. So
\begin{equation}\label{cota11}
\limsup_{\ve \cf 0} \lam_k^\ve \leq \lam_k.
\end{equation}
On the other hand, suppose that $\lim_{\ve \cf 0} \lam_k^\ve = \lambda$. By Lemma \ref{separa-ceros} the $k$ nodal domains of $u_k^\epsilon$ have positive measure independent of $\ve$. Then it must be $\lambda\geq \lam_k$. It follows that
\begin{equation}\label{cota22}
\lam_k \leq \lambda =\lim_{\ve \cf 0} \lam_k^\ve
\end{equation}
Combining \eqref{cota11} and \eqref{cota22} we obtain the desired result.
\end{proof}

\section*{Acknowledgements}

This work was partially supported by Universidad de Buenos Aires under grant 20020100100400 and by CONICET (Argentina) PIP 5478/1438.

\end{document}